\let\emptyset \undefined
\let\ge       \undefined
\let\le       \undefined
\theoremstyle{plain}
\newtheorem{theorem}{Theorem}[section]
\theoremstyle{remark}
\newtheorem{remark}[theorem]{Remark}
\newtheorem{example}[theorem]{Example}
\theoremstyle{plain}
\newtheorem{lemma}[theorem]{Lemma}
\newtheorem{proposition}[theorem]{Proposition}
\newtheorem{assumption}[theorem]{Assumption}
\numberwithin{equation}{section}
\def\R{{\mathbb R}}
\def\C{{\mathbb C}}
\newcommand{\E}{{\mathbb E}}
\renewcommand{\P}{{\mathbb P}}
\newcommand{\F}{{\mathcal F}}
\newcommand{\calC}{{\mathscr C}}
\newcommand{\calA}{{\mathscr A}}
\newcommand{\calB}{{\mathscr B}}
\newcommand{\OO }{{\mathcal O}}
\renewcommand{\d}{\delta}
\newcommand{\e}{\varepsilon}
\newcommand{\la}{\lambda}
\renewcommand{\o}{\omega}
\renewcommand{\L}{L^2(E,\mui)}
\newcommand{\LH}{L^2(E,\mui;H)}
\newcommand{\LK}{L^2(\OO,\mui)}
\newcommand{\LKH}{L^2(\OO,\mui;H)}
\newcommand{\beq}{\begin{equation}}
\newcommand{\eeq}{\end{equation}}
\newcommand{\bal}{\begin{aligned}}
\newcommand{\eal}{\end{aligned}}
\newcommand{\ben}{\begin{enumerate}}
\newcommand{\een}{\end{enumerate}}
\newcommand{\bit}{\begin{itemize}}
\newcommand{\eit}{\end{itemize}}
\newcommand{\bea}{\begin{align}}
\newcommand{\eea}{\end{align}}
\newcommand{\bth}{\begin{theorem}}
\renewcommand{\eth}{\end{theorem}}
\newcommand{\bpr}{\begin{proposition}}
\newcommand{\epr}{\end{proposition}}
\newcommand{\ble}{\begin{lemma}}
\newcommand{\ele}{\end{lemma}}
\newcommand{\bpf}{\begin{proof}}
\newcommand{\epf}{\end{proof}}
\newcommand{\bex}{\begin{example}}
\newcommand{\eex}{\end{example}}
\newcommand{\bre}{\begin{example}}
\newcommand{\ere}{\end{example}}
\newcommand{\D}{{\mathsf D}}
\newcommand{\Ran}{\mathsf{R}}
\newcommand{\calL}{{\mathscr L}}
\newcommand{\n}{\Vert}
\newcommand{\one}{\mathbbm{1}}
\newcommand{\s}{^*}
\newcommand{\lb}{\langle}
\newcommand{\rb}{\rangle}
\newcommand{\limn}{\lim_{n\to\infty}}
\newcommand{\ra}{\rightarrow}
\newcommand{\ii}{\ensuremath{\infty}}
\newcommand{\wt}{\widetilde}
\newcommand{\wh}{\widehat}
\newcommand{\mui}{\mu_\infty}
\newcommand{\Qi}{Q_\infty}
\newcommand{\ovRD}{\overline{\Ran(D_\OO ^H)}}
\newcommand{\symm}{\hbox{\tiny\textcircled{s}}}
\begin{document}

\title[Non-symmetric Ornstein-Uhlenbeck semigroups on domains]{
$L^2$-Theory for non-symmetric Ornstein-Uhlenbeck semigroups on domains}

\author{Joyce Assaad}
\address{Notre Dame University - Louaize\\
Faculty of Natural and Applied Sciences
\\ Department of Mathematics and Statistics
\\ P.O. Box 72, Zouk Mikael
\\ Zouk Mosbeh
\\ Lebanon}
\email{joyce.assaad@ndu.edu.lb}

\author{Jan van Neerven}
\address{Delft Institute of Applied Mathematics
\\ Delft University of Technology 
\\ P.O. Box 5031
\\ 2600 GA Delft
\\ The Netherlands} 
\email{J.M.A.M.vanNeerven@TUDelft.nl}       

\begin{abstract}
We prove that the mild solution of the stochastic evolution equation
$dX(t) = AX(t)\,dt + dW(t)$
on a Banach space $E$ has a continuous modification if the associated 
Ornstein-Uhlenbeck semi\-group is analytic on $L^2$ with respect to the invariant measure.
This result is used to extend recent work of Da Prato and Lunardi for Ornstein-Uhlenbeck semigroups
on domains $\OO\subseteq E$ to the non-symmetric case. Denoting the generator of the 
Ornstein-Uhlenbeck semigroup by $L_\OO$, we obtain sufficient conditions
in order that the domain of $\sqrt{-L_\OO}$ be a first order Sobolev space.
\end{abstract}

\keywords{Analytic Ornstein-Uhlenbeck semigroups, Riesz transforms on domains, domain identification,
Poincar\'e inequality, $H^\infty$-functional calculus}

\subjclass[2000]{Primary: 35R15 Secondary: 35J25, 42B25, 46E35, 47D05, 60H07}

\thanks{The second named author is supported by VICI subsidy 639.033.604
of the Netherlands Organisation for Scientific Research (NWO)}

\maketitle
 
\section{Introduction}
In this paper we present new results on analytic Ornstein-Uhlenbeck semigroups
associated with the linear Cauchy problem
$$ dX(t) = AX(t)\,dt + dW(t),$$
where $A$ is the generator of a $C_0$-semigroup on a Banach space $E$
and $W$ is a cylindrical Brownian motion,
and use them to extend recent work of Da Prato and Lunardi \cite{DL} for Ornstein-Uhlenbeck semigroups
on domains (see also \cite{DPGZ})
to the non-symmetric case. 
The approach in \cite{DL} is based on the Feynman-Kac formula
and uses the pathwise continuity of the Ornstein-Uhlenbeck process in a crucial way. Our first main
result (Theorem \ref{thm:cont-version}) asserts that in the non-symmetric case, pathwise
continuity still holds provided the Ornstein-Uhlenbeck semigroup is analytic on $L^2(E,\mu_\infty)$.
Here $\mu_\infty$ denotes an invariant measure whose existence we assume throughout.
Further new results concern the $\mu_\infty$-almost sure pointwise convergence of analytic
Ornstein-Uhlenbeck semigroups to the projection
onto the constant functions in $L^p(E,\mu_\infty)$ (Theorem \ref{thm:mu-as}) and a Poincar\'e
inequality for analytic Ornstein-Uhlenbeck semigroups (Theorem \ref{thm:PoincI}).

The construction and discussion of the main properties of the Ornstein-Uhlenbeck semigroup
$(P_\OO (t))_{t\ge 0}$ and its generator $L_\OO $
on an open domain $\OO$ in $E$, presented in Sections \ref{sec:FK} and \ref{sec:PK}, are
 extensions of their symmetric counterparts in \cite{DL}; see also \cite{DPGZ, Talar} for earlier work. 
In contrast,
the domain identification of $\D(\sqrt{-L_\OO })$ in $L^2(E,\mu_\infty)$ as a first order Gaussian Sobolev space is
essentially trivial in the symmetric case but requires substantial effort in the non-symmetric case.
In order to establish this identification, in Section \ref{sec:RT}
we adapt arguments from recent work by Maas and the second named
author \cite{MN2, MN-gradient}. 
As an application we prove a Poincar\'e inequality for the gradient on $\OO$ in the direction of $H$.

All spaces are real. The domain and range of a (possibly unbounded) linear operator $A$
are denoted by $\D(A)$ and $\Ran(A)$ respectively. Our terminology, in as far unexplained,
follows \cite{GN, MN, MN2, MN-gradient, NW}.

\section{Analytic Ornstein-Uhlenbeck semigroups}

Let $E$ be a real Banach space and $H$ a real Hilbert space, continuously embedded into $E$ by
means of a bounded injective
linear operator $i:H\to E$, and with inner product $[\cdot,\cdot]$. We fix a probability space $(\Omega,\P)$ and 
let $W^H$ an {\em $H$-cylindrical Brownian motion}, that is, a linear mapping $W^H: L^2(\R_+;H)\to L^2(\Omega,\P)$
satisfying
\begin{enumerate}
 \item for all $f\in L^2(\R_+;H)$ the random variable $W^H f$ is centred Gaussian distributed;  
 \item for all $f,g\in L^2(\R_+;H)$ we have 
$$ \E \lb W^H f \, W^H g) = \int_0^\infty [f(t),g(t)]\,dt.$$ 
\end{enumerate}
For $t\ge 0$ and $h\in H$ we put $$ W^H(t)h := W^H(\one_{[0,t]}\otimes h)$$
and note that $(W_H(t)h)_{t\ge 0}$ is a Brownian motion, which is standard if and only if $\n h\n = 1$.
Moreover, two such Brownian motions $(W_H(t)h)_{t\ge 0}$ and $(W_H(t)h')_{t\ge 0}$ are independent if and
only if $[h,h'] = 0$.   

Let $S = (S(t))_{t\ge 0}$ be a $C_0$-semigroup of bounded linear operators, with generator $A$, on $E$.
Throughout this paper we shall make the following standing assumption.

\begin{assumption}\label{ass:1}
The linear stochastic Cauchy problem
\begin{equation}\label{SPDE}
\begin{aligned}
dX(t) & = AX(t)\, dt + dW^H(t), \quad t\ge 0,
\end{aligned}
\end{equation}
admits an invariant measure.
\end{assumption}

Note that in \eqref{SPDE} we suppress the inclusion mapping $i$ and
identify $H$ with a linear subspace of $E$. Recall that a Radon measure $\mu$ on $E$ is said to be 
{\em invariant} for the problem \eqref{SPDE} if the following holds. Whenever $X_0$ is an $E$-valued
random variable, independent of $W^H$ and with distribution $\mu$, the initial value problem 
\begin{equation*}
\begin{aligned}
dX(t) & = AX(t)\, dt + dW^H(t), \quad t\ge 0, \\
 X(0) & = X_0,
\end{aligned}
\end{equation*}
is well-posed and its unique mild solution is stationary (with distribution $\mu$). 
Necessary and sufficient conditions for well-posedness can be found in \cite{NW}.

In order to arrive
at a useful equivalent formulation of Assumption \ref{ass:1} we need the following
terminology. The {\em reproducing kernel Hilbert space} $H_\mu$ associated with a centred Gaussian Radon
measure $\mu$ on $E$ is the closure in $L^2(E,\mu)$ of the dual space $E\s$ (identifying functionals $x\s\in E\s$ with the 
functions $x\mapsto \lb x,x\s\rb$ in $L^2(E,\mu)$). The mapping
$$i_\mu: H_\mu\to E, \qquad i_\mu x\s = \int_E \lb x,x\s\rb x\,d\mu(x)$$
is continuous and injective, and its adjoint is given by
$$i_\mu\s: E\s \to H_\mu, \qquad i_\mu\s x\s = \lb \cdot, x\s\rb.$$
Here and in what follows, we identify $H_\mu$ with its dual using the Riesz representation theorem.

Using this terminology, Assumption \ref{ass:1} is satisfied if and only if 
there exists a centred Gaussian Radon measure $\mui$ on $E$
whose reproducing kernel Hilbert space $i_\ii : H_\ii \to E$ satisfies
$$ \n i_\ii\s x\s\n_{H_\ii}^2 = \int_0^\infty \n i\s S\s(t)x\s\n_H^2\,dt, \quad x\s\in E\s.$$
(see  \cite{DZ, GN}).
The measure $\mu_\infty$ is then invariant. 

On the space $B_{\rm b}(E)$ of bounded Borel functions $f:E\ra\R$ we define the operators
$P(t)$, $t\ge 0$, by
\begin{equation}\label{eq:Pt}
 P(t)f(x):=\E f(X^x(t)), \quad t\ge0, \ \ x\in E,
\end{equation}
where
\begin{equation}\label{eq:mild} X^x(t) := S(t)x + \int_0^t S(t-s)\,dW^H(s)
\end{equation}

is the mild solution of the problem \eqref{SPDE}
with initial value $x$; the existence and uniqueness of this solution is implicit in the Assumption \ref{ass:1}.
These operators satisfy $P(0)=I$ and $P(t+s)=P(t)P(s)$ for all $t$ and $s\ge0$.
For all $1\le p<\infty$ the family $P = (P(t))_{t\ge 0}$ extends to a $C_0$-contraction
semigroup on $L^p(E,\mui)$ satisfying
\begin{align}\label{eq:invariance}
 \int_E P(t)f\,d\mui = \int_E f\,d\mui, \quad f\in L^p(E,\mui), \ t\ge 0.
\end{align}
Throughout this paper we make the following assumption.

\begin{assumption}\label{ass:2}
The semigroup $P$ is analytic on $L^2(E,\mui)$.
\end{assumption}

In statements like these, we always tacitly pass to the complexifications of the operators and
the spaces involved. Thus, what we are assuming is that $P_\C$ is analytic on $L^2(E,\mui;\C)$.
This assumption implies (see \cite{MN}) that $P_\C$ is in fact an analytic contraction semigroup on
$L^p(E,\mui;\C)$ for all $1<p<\infty$.

Necessary and sufficient conditions for this assumption to be satisfied are presented 
in \cite[Theorem 8.3]{GN}; this result 
extends previous results by Fuhrman \cite{Fu} and Go{\l}dys \cite{Go}.
As a corollary to this result (see \cite[Theorem 9.2]{GN}), 
Assumption \ref{ass:2} holds if the semigroup $S$ 
restricts to an analytic semigroup on $H$ which is {\em contractive with respect to some Hilbertian norm}.
This sufficient condition is close to being necessary: if Assumption \ref{ass:2} holds,
then $S$ restricts to a {\em bounded} analytic semigroup on $H$ \cite{MN-gradient}.

\begin{remark}
In applications to parabolic SPDEs the above sufficient condition is usually satisfied, the typical
situation being that $A$ is a second order elliptic operator on some domain $D\subseteq \R^d$ and $H = L^2(D)$.
\end{remark}

We proceed with a discussion of some consequences of Assumptions \ref{ass:1}
and \ref{ass:2} that will be needed later on.

Let $U: H_\ii\ra H$ be the linear operator with initial domain $i\s_\ii (E\s)$,
defined by $$U i\s_\ii x\s:=i\s x\s, \quad x\s\in E\s.$$
This operator is densely defined, and,
by \cite[Theorem 3.5]{GGN}, the analyticity of $P$ on $\L$ implies that $U$ is closable.
From now on we denote by $U$ its closure and by $\D(U)$ the domain of this closure.

Let $\phi:H_\ii\to \L$ be the isometric embedding given by
$$ (\phi (i_\ii\s x\s)) (\cdot):= \lb \,\cdot\,,x\s\rb, \quad x\s\in E\s.$$
In order to simplify notations a bit, we shall write $$\phi_h(x) := (\phi (h))(x).$$
When $H_0$ is a linear subspace of $H_\ii$ and $k\ge0$ is an integer, we denote by $\F
C_{\rm b}^k(E,H_0)$ the vector space of all $\mui$-almost everywhere defined
functions $f:E\ra\R$ of the form
\[f:=\varphi(\phi_{h_1},\cdots,\phi_{h_n}), \quad x\in E,\]
with $n\ge1, \varphi\in C_{\rm b}^k(\R^n)$, and $h_1,\cdots, h_n\in H_0$. Here
$C_{\rm b}^k(\R^n)$ is the space of all bounded continuous functions with bounded
continuous derivatives up to order $k$.\\
For $f\in \F C_{\rm b}^1(E, \D(U))$ we define the Fr\'echet derivative $D^H f:E\to H$ of $f$ in the direction of
$H$ by
\[D^Hf:=\sum_{j=1}^{n} \partial_j\varphi(\phi_{h_1},\cdots,
\phi_{h_n})\otimes Uh_j.\]
The closability of $U$ implies that $D^H$ is closable from $L^p(E,\mui)$ to $L^p(E,\mui;H)$ for all
$p\in [1,\ii)$ (see \cite[Theorem 3.5]{GGN} and \cite[Proposition 8.7]{GN}). Henceforth, by slight abuse of notation
 we denote by $D^H$ its closure in $L^p(E,\mui)$ and write
$$W_H^{1,p}(E,\mui):= \D_p(D^H)$$
for the domain of this closure in $L^p(E,\mui)$. Furthermore we write $$\D(D^H) := \D_2(D^H).$$

Under Assumption \ref{ass:1}, $S$ maps $H_\ii$ into itself, and the restriction $S_\infty = S|_{H_\ii}$
is a $C_0$-contraction semigroup on $H_\ii$. We shall denote its generator by $A_\infty$.
The next result is taken from \cite{GN} and \cite{MN}.

\begin{proposition}\label{prop:MN} Let Assumptions \ref{ass:1} and \ref{ass:2} be satisfied.
There exists a unique bounded operator $B\in \calL(H)$ such
that $$i B i\s x\s= - i_\ii A_\infty\s i_\ii\s x\s, \quad x\s \in \D(A\s).$$
This operator satisfies  $B+B\s=I$ and
 $[Bh,h] =\frac1 2\n h\n_H^2$ for all $h\in H$.
\end{proposition}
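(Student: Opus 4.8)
The plan is to isolate one genuinely analytic ingredient --- the existence of a \emph{bounded} operator $B$ obeying the stated relation --- which is exactly where Assumption~\ref{ass:2} enters and which is furnished by the characterisation of analyticity of $P$ in \cite[Thm.~8.3]{GN} (together with \cite{MN,MN-gradient}); everything else is formal. Two intertwining facts will be used repeatedly. Since $S_\ii=S|_{H_\ii}$ we have $S(t)i_\ii=i_\ii S_\ii(t)$, and taking Hilbert adjoints on $H_\ii$ gives $S_\ii\s(t)i_\ii\s=i_\ii\s S\s(t)$ on $E\s$, where $S_\ii\s$ is the adjoint semigroup on $H_\ii$ with generator $A_\ii\s$. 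In particular, for $x\s\in\D(A\s)$ one has $i_\ii\s x\s\in\D(A_\ii\s)$ and $A_\ii\s i_\ii\s x\s=i_\ii\s A\s x\s$. I would begin with uniqueness: for $h\in H$ the functional $ih\in E$, regarded in $E^{**}$, is $\sigma(E\s,E)$-continuous, while $\D(A\s)$ is $\sigma(E\s,E)$-dense in $E\s$; hence $\lb ih,x\s\rb=0$ for all $x\s\in\D(A\s)$ forces $ih=0$, i.e.\ $h=0$ by injectivity of $i$. Thus $i\s(\D(A\s))$ is dense in $H$, and since $i$ is injective the relation $iBi\s x\s=-i_\ii A_\ii\s i_\ii\s x\s$ determines $Bi\s x\s$ for every $x\s\in\D(A\s)$, hence $B$ on a dense subspace; so at most one $B\in\calL(H)$ can satisfy it.

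For existence, the relation forces $B(i\s x\s):=-i^{-1}(i_\ii A_\ii\s i_\ii\s x\s)$ for $x\s\in\D(A\s)$, and two points must be settled. The first --- elementary --- is that the right-hand side depends only on $i\s x\s$, i.e.\ $\ker i\s\subseteq\ker i_\ii\s$: under Assumptions~\ref{ass:1}--\ref{ass:2} the semigroup $S$ restricts to $H$, so $\Ran(i)$ is $S(t)$-invariant, hence $\ker i\s=(\Ran i)^\perp$ is $S\s(t)$-invariant; if $i\s x\s=0$ it follows that $i\s S\s(t)x\s=0$ for all $t\ge0$, whence $\n i_\ii\s x\s\n_{H_\ii}^2=\int_0^\infty\n i\s S\s(t)x\s\n_H^2\,dt=0$. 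The second --- and this is the crux --- is that $i_\ii A_\ii\s i_\ii\s x\s$ actually lies in $\Ran(i)$ and that $\n i^{-1}(i_\ii A_\ii\s i_\ii\s x\s)\n_H\le C\n i\s x\s\n_H$ with $C$ independent of $x\s$; this is precisely the estimate underpinning the analyticity criterion of \cite[Thm.~8.3]{GN} (see also \cite{MN,MN-gradient}), and I would invoke it. Granting it, $B$ extends from $i\s(\D(A\s))$ to an operator in $\calL(H)$ satisfying the defining relation for every $x\s\in\D(A\s)$.

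It remains to derive the two algebraic identities, and here I would differentiate the defining identity of $\mu_\infty$. Polarisation gives $\lb i_\ii\s x\s,i_\ii\s y\s\rb_{H_\ii}=\int_0^\infty [\,i\s S\s(t)x\s,i\s S\s(t)y\s\,]\,dt$ for $x\s,y\s\in E\s$; substituting $S\s(s)x\s,S\s(s)y\s$ for $x\s,y\s$ and using $i_\ii\s S\s(s)=S_\ii\s(s)i_\ii\s$ yields
\[
\lb S_\ii\s(s)i_\ii\s x\s,\,S_\ii\s(s)i_\ii\s y\s\rb_{H_\ii}=\int_s^\infty [\,i\s S\s(t)x\s,\,i\s S\s(t)y\s\,]\,dt .
\]
For $x\s,y\s\in\D(A\s)$ both sides are differentiable at $s=0$ (the left one because $i_\ii\s x\s,i_\ii\s y\s\in\D(A_\ii\s)$), and equating derivatives there gives
\[
\lb A_\ii\s i_\ii\s x\s,\,i_\ii\s y\s\rb_{H_\ii}+\lb i_\ii\s x\s,\,A_\ii\s i_\ii\s y\s\rb_{H_\ii}=-[\,i\s x\s,\,i\s y\s\,].
\]
Using the defining relation, $\lb A_\ii\s i_\ii\s x\s,i_\ii\s y\s\rb_{H_\ii}=\lb i_\ii A_\ii\s i_\ii\s x\s,y\s\rb=-\lb iBi\s x\s,y\s\rb=-[Bi\s x\s,i\s y\s]$, and likewise the second summand equals $-[B\s i\s x\s,i\s y\s]$; hence $[(B+B\s)i\s x\s,i\s y\s]=[i\s x\s,i\s y\s]$ for all $x\s,y\s\in\D(A\s)$, and density of $i\s(\D(A\s))$ in $H$ gives $B+B\s=I$. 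Finally, for $h\in H$, $[Bh,h]=\tfrac12[(B+B\s)h,h]=\tfrac12\n h\n_H^2$, using $[Bh,h]=[B\s h,h]$.

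The only non-formal step is the boundedness claimed in the existence part --- equivalently, that the skew-symmetric part $B-\tfrac12 I$ is a bounded operator on $H$. This is in effect a reformulation of Assumption~\ref{ass:2}, and the required bound is taken from \cite{GN,MN,MN-gradient}; I expect this to be the main obstacle, while the rest of the argument is bookkeeping with the intertwining relations and with density.
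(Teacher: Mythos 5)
The paper itself gives no proof of Proposition \ref{prop:MN}: it is imported wholesale from \cite{GN} and \cite{MN} (``The next result is taken from...''). Your proposal is correct and, at bottom, rests on the same source: the one genuinely analytic ingredient --- that analyticity of $P$ on $L^2(E,\mu_\infty)$ yields a \emph{bounded} $B\in\calL(H)$ with $iBi^*x^*=-i_\infty A_\infty^* i_\infty^* x^*$ on $\D(A^*)$ (equivalently $=-Q_\infty A^*x^*$, since $i_\infty^* A^* x^* = A_\infty^* i_\infty^* x^*$) --- you invoke from \cite[Thm.~8.3]{GN} and \cite{MN, MN-gradient}, which is exactly where the paper places the substance as well; note the authors use precisely this boundedness of $B$ again in the proof of Theorem \ref{thm:cont-version}. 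What you add beyond the paper is a correct reconstruction of the formal parts that \cite{GN, MN} also contain: uniqueness via weak$^*$-density of $\D(A^*)$ in $E^*$ (hence density of $i^*(\D(A^*))$ in $H$), well-definedness via the $S^*$-invariance of $\ker i^*$, and the identities $B+B^*=I$ and $[Bh,h]=\tfrac12\Vert h\Vert_H^2$ obtained by polarising and differentiating $\Vert i_\infty^* x^*\Vert_{H_\infty}^2=\int_0^\infty\Vert i^*S^*(t)x^*\Vert_H^2\,dt$, i.e.\ the Lyapunov equation for $Q_\infty$. The only steps you leave tacit --- that $t\mapsto S^*(t)x^*$ is norm continuous for $x^*\in\D(A^*)$ (so the right-hand side can be differentiated at $s=0$) and that weak differentiability of $s\mapsto S_\infty^*(s)i_\infty^*x^*$ puts $i_\infty^*x^*$ in $\D(A_\infty^*)$ --- are standard semigroup facts, so there is no genuine gap; your write-up simply re-derives inside the paper the algebraic content that the paper delegates, together with its analytic core, to the cited references.
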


Let $l$ be the sesquilinear form defined by
 \begin{equation}\label{eq:l}
l(f,g):=[BD^Hf,D^Hg]\quad f,g\in \D(l):=\D(D^H),
 \end{equation}
where $B\in \calL(H)$ is the bounded operator described in Proposition \ref{prop:MN}.
 This form is closed, densely defined, accretive and sectorial on
 $L^2(E,\mui)$. Let us denote by $D^{H*} BD^H$ the associated sectorial operator with domain consisting
 of all functions $f\in \D(D^H)$ such that $BD^Hf \in \D(D^{H*})$.  This domain is a core
 for $\D(D^H)$ (see \cite[Lemma 1.25]{O}) and we have (see \cite{MN}):

\begin{proposition}\label{prop:div}
 Let Assumptions \ref{ass:1} and \ref{ass:2} be satisfied.
The generator $L$ of the semigroup $P$ on $\L$ equals $$L = - D^{H*} BD^H.$$
\end{proposition}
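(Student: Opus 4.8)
The plan is to identify the generator $L$ by showing that the sectorial operator $-D^{H*}BD^{H}$ associated with the form $l$ in \eqref{eq:l} coincides with $L$. Since both are generators of $C_0$-semigroups (equivalently: both are $m$-sectorial on $\L$), it suffices to show that they agree on a common core, or that their resolvents agree, or — perhaps most efficiently — to compute the form associated with $L$ and recognize it as $l$. Concretely, I would first fix the core $\F C_{\rm b}^2(E,\D(U))$, which is dense in $\D(D^{H})$ and, by the standard Mehler-formula computations for Ornstein–Uhlenbeck semigroups, is contained in $\D(L)$ with an explicit expression for $Lf$ when $f=\varphi(\phi_{h_1},\dots,\phi_{h_n})$.

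The key computation is then to evaluate $-\langle Lf,g\rangle_{\L}$ for $f,g$ in this core and show it equals $l(f,g)=[BD^{H}f,D^{H}g]$. For $f=\varphi(\phi_{h_1},\dots,\phi_{h_n})$ one differentiates $P(t)f$ at $t=0$ using $X^{x}(t)=S(t)x+\int_0^t S(t-s)\,dW^{H}(s)$; the drift term produces $\langle \cdot, A^{*}x^{*}\rangle$-type contributions and the noise term produces the second-order (Ornstein–Uhlenbeck) part. Using Proposition \ref{prop:MN}, the operator $B$ with $iBi^{*}x^{*}=-i_{\infty}A_{\infty}^{*}i_{\infty}^{*}x^{*}$ is precisely what converts the drift contribution $A_{\infty}$ acting on $H_{\infty}$ into the weighted gradient pairing $[BUh_j,Uh_k]$; integration by parts against the Gaussian measure $\mu_\infty$ (i.e. the Gaussian integration-by-parts identity $\int_E \phi_h f\,d\mu_\infty = \int_E [D^{H}f, \text{(something)}]\,d\mu_\infty$, combined with $B+B^{*}=I$) moves the remaining derivative off the second argument. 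The upshot is $-\langle Lf,g\rangle = \int_E [BD^{H}f,D^{H}g]\,d\mu_\infty = l(f,g)$.

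Once the form identity holds on a core, I would invoke the uniqueness part of the representation theorem for sectorial forms (e.g. \cite[Lemma 1.25]{O} or Kato's first representation theorem): the $m$-sectorial operator associated with the closed form $l$ is uniquely determined, and since $L$ is $m$-sectorial (it generates the analytic contraction semigroup $P$ on $\L$ by Assumption \ref{ass:2}), its associated form $l_{L}$ is closed; showing $l_{L}$ and $l$ agree on the common core $\F C_{\rm b}^{2}(E,\D(U))$ and that this core is a form-core for both then forces $l_{L}=l$, hence $L=-D^{H*}BD^{H}$. The verification that the core is a form-core for $L$ follows from analyticity (hence $\D(L)$ dense in the form domain) together with density of $\F C_{\rm b}^{2}(E,\D(U))$ in $\D(D^{H})$ in the graph norm, which was noted above the statement.

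The main obstacle is the integration-by-parts step in the non-symmetric setting: one must carefully justify the Gaussian integration by parts for functions in $\F C_{\rm b}^{2}(E,\D(U))$ against $\mu_\infty$, track the role of the inclusion $i:H\to E$ (which is suppressed in \eqref{SPDE}), and verify that the drift contribution is exactly $-i_{\infty}A_{\infty}^{*}i_{\infty}^{*}$ so that Proposition \ref{prop:MN} applies to replace it by $iBi^{*}$; the identities $B+B^{*}=I$ and $[Bh,h]=\tfrac12\|h\|_H^{2}$ are what guarantee the resulting form is accretive and sectorial with the right symbol, matching \eqref{eq:l}. The rest is bookkeeping with the Mehler formula and the closability of $U$ and $D^{H}$ already granted in the excerpt.
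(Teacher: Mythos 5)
The paper itself gives no proof of this proposition: it is quoted from \cite{MN}, so your reconstruction should be measured against that argument. Your overall strategy (compute $L$ on cylindrical functions via the Mehler/It\^o formula, integrate by parts against $\mu_\infty$, use Proposition \ref{prop:MN} to convert the drift into the operator $B$, and conclude by a core/uniqueness argument) is indeed the route taken there, but as written your sketch has two genuine gaps. First, $\F C_{\rm b}^2(E,\D(U))$ is not a legitimate core for this purpose: for $h\in\D(U)$ one only knows $\phi_h\in\D(D^H)$, whereas membership of $\varphi(\phi_{h_1},\dots,\phi_{h_n})$ in $\D(L)$ requires differentiability of $t\mapsto S_\infty^*(t)h_j$ at $t=0$, i.e.\ $h_j\in\D(A_\infty^*)$ (in practice one works with cylindrical functions built on $i_\infty^*\D(A^*)$); for general $h_j\in\D(U)$ the drift term $\phi_{A_\infty^* h_j}$ appearing in your computation of $Lf$ does not even exist. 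One must then check separately that this smaller class is an operator core for $L$ (e.g.\ by $P(t)$-invariance and analyticity, as in the argument of Lemma \ref{calC}) and that it is dense in $\D(D^H)$ for the graph norm, so that the integration-by-parts identity extends in the variable $g$ to all of $\D(D^H)$.

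Second, and more seriously, your concluding step is circular. Equality of $l$ with the form $l_L$ associated with $L$ on a set that is a form core for $l$ does not yield $l_L=l$; you would need the set to be a form core for $l_L$, and your justification (density of the cylindrical class in $\D(D^H)$ plus density of $\D(L)$ in the form domain of $l_L$) presupposes $\D(l_L)=\D(D^H)$, which is essentially what is to be proved. The standard repair, and what \cite{MN} does, avoids forms at this stage: for $f$ in an operator core of $L$, the identity $-\langle Lf,g\rangle_{\L}=\int_E[BD^Hf,D^Hg]\,d\mu_\infty$ valid for all $g\in\D(D^H)$ shows directly that $BD^Hf\in\D(D^{H*})$ and $Lf=-D^{H*}BD^Hf$; closedness of $D^{H*}BD^H$ then gives $L\subseteq -D^{H*}BD^H$ on all of $\D(L)$, and since both $L$ and $-D^{H*}BD^H$ generate $C_0$-semigroups (the latter because $l$ is closed, sectorial and accretive), a generator admits no proper generator extension, whence equality. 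With the corrected core and this final argument, your sketch becomes the proof the paper is citing.
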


Below we shall also need the following result.

\begin{theorem}\label{thm:cont-version}
 Let Assumptions \ref{ass:1} and \ref{ass:2} be satisfied.
Then, for any initial condition $x_0\in E$, the mild solution
$X^{x_0}$ of the problem \eqref{SPDE} admits a continuous modification.
\end{theorem}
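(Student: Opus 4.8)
The plan is to use the analyticity hypothesis to upgrade the regularity of the stochastic convolution and then apply a Kolmogorov-type continuity criterion. Write the mild solution as $X^{x_0}(t) = S(t)x_0 + Y(t)$, where $Y(t) = \int_0^t S(t-s)\,dW^H(s)$ is the stochastic convolution; since $t\mapsto S(t)x_0$ is already continuous, it suffices to produce a continuous modification of $Y$. The law of $Y(t)$ is the centred Gaussian measure $\mu_t$ on $E$ whose reproducing kernel is controlled by $\int_0^t \n i\s S\s(s)x\s\n_H^2\,ds$, and under Assumption \ref{ass:1} these increase to $\mu_\infty$. The key point is that analyticity of $P$ on $L^2(E,\mu_\infty)$ forces the semigroup $S_\infty$ on $H_\infty$ to be bounded analytic (as recalled in the excerpt, citing \cite{MN-gradient}), and bounded analyticity yields smoothing: $S(t)$ maps $E$ into the Cameron--Martin-type space $H_\infty$ with a quantitative bound $\n S(t)\n_{\calL(E,H_\infty)} \lesssim t^{-1/2}$ near $t=0$ (this is essentially the content of Proposition \ref{prop:MN}, where $B$ encodes $-i_\infty A_\infty\s i_\infty\s$, together with the standard estimate $\n A_\infty S_\infty(t)\n \lesssim 1/t$).

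Concretely, I would first establish a Hölder estimate for the increments of $Y$ in $L^p(\Omega;E)$. For $0\le s<t$ one splits
\[
 Y(t) - Y(s) = \int_s^t S(t-r)\,dW^H(r) + (S(t-s)-I)\int_0^s S(s-r)\,dW^H(r).
\]
The first term is a Gaussian random variable in $E$ whose covariance is $\mu_{t-s}$, hence by Fernique's theorem its $L^p(\Omega;E)$-norm is comparable to $(\E\n\cdot\n_E^2)^{1/2}$, which one bounds by $|t-s|^{\a}$ for some $\a>0$ using the smoothing estimate and the embedding $H_\infty\embed E$. For the second term one uses that $\int_0^s S(s-r)\,dW^H(r)$ takes values in $H_\infty$ (again by the smoothing bound, after an integration argument), so $(S(t-s)-I)$ acts on an $H_\infty$-valued variable; the analyticity of $S_\infty$ then gives $\n (S_\infty(t-s)-I)x\n_{H_\infty} \lesssim |t-s|^{\a}\n A_\infty^{\a}x\n_{H_\infty}$ for $\a<1$, and one checks the relevant fractional moment is finite. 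Combining, $\E\n Y(t)-Y(s)\n_E^p \lesssim |t-s|^{p\a}$ for all $p$, so choosing $p$ large makes $p\a>1$, and the Kolmogorov--Chentsov continuity theorem (valid for Banach-space-valued processes) delivers a continuous modification of $Y$, hence of $X^{x_0}$.

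The main obstacle is making the smoothing estimate $\n S(t)\n_{\calL(E,H_\infty)}\lesssim t^{-1/2}$ precise and rigorously deriving from it that the stochastic convolution lives in $H_\infty$ with the right moment bounds: this requires translating the operator-theoretic boundedness of the analytic semigroup on $H_\infty$ (Proposition \ref{prop:MN}) into a statement about Gaussian measures on $E$, and controlling the interplay between the $E$-norm and the $H_\infty$-norm via Fernique's theorem. A secondary technical point is justifying the fractional-power estimate $\n(S_\infty(r)-I)A_\infty^{-\a}\n_{\calL(H_\infty)}\lesssim r^{\a}$ and verifying that the object it is applied to has finite $L^p(\Omega;\D(A_\infty^\a))$-norm; this is routine analytic-semigroup theory once the $H_\infty$-valued integrability of the convolution is in hand. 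Everything else — Fernique, Kolmogorov, the mild-solution decomposition — is standard.
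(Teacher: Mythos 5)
Your argument hinges on two smoothing claims that do not follow from Assumptions \ref{ass:1} and \ref{ass:2}, and that are in fact false in the most classical example covered by the theorem. First, the result you cite from \cite{MN-gradient} says that $S$ restricts to a bounded analytic semigroup on $H$, not on $H_\infty$; likewise Proposition \ref{prop:MN} only produces a bounded operator $B$ on $H$ with $iBi\s x\s=-i_\ii A_\infty\s i_\ii\s x\s$, i.e.\ boundedness of $\Qi A\s$ from $E\s$ to $E$ --- it gives no estimate of the form $\n S(t)\n_{\calL(E,H_\infty)}\lesssim t^{-1/2}$. Such an estimate is essentially the null-controllability condition behind the strong Feller property, an additional hypothesis that analyticity of $P$ does not imply. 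Concretely, take the classical Mehler semigroup: $E$ an abstract Wiener space with Gaussian measure $\gamma$, $i:H\embed E$ its Cameron--Martin embedding, $S(t)=e^{-t/2}I_E$. Then $P$ is self-adjoint, hence analytic, on $L^2(E,\gamma)$, $\mui=\gamma$ and $H_\infty=H$; yet $S(t)x\in H_\infty$ only for $x\in H$, which is a $\gamma$-null set, and the stochastic convolution $\int_0^s S(s-r)\,dW^H(r)$ has covariance $(1-e^{-s})\Qi$, so it lies outside $H_\infty$ almost surely. Thus both the bound $\n S(t)\n_{\calL(E,H_\infty)}\lesssim t^{-1/2}$ and the claim that the convolution is $H_\infty$-valued fail, and with them your treatment of the second term in the decomposition of $Y(t)-Y(s)$. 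The first term is not under control either: the hypotheses only yield the weak variance bound $\E\lb \int_s^t S(t-r)\,dW^H(r),x\s\rb^2\le M\,|t-s|\,\n x\s\n^2$, and in infinite dimensions a bound on one-dimensional projections does not give $\E\n\cdot\n_E^2\lesssim|t-s|^{2\a}$ with any rate; this is precisely the obstruction that blocks a direct Kolmogorov--Chentsov argument for the $E$-valued process.

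The paper's proof accepts this obstruction and works with weak increments only. It stationarizes by adding an independent copy of the invariant law, $Z(t)=S(t)Y_\infty+X^0(t)$, computes $\E\lb Z(t)-Z(s),x\s\rb^2=2\lb\Qi(I-S\s(t-s))x\s,x\s\rb$, and uses the boundedness of $\Qi A\s$ (the genuine consequence of Assumption \ref{ass:2}, via Proposition \ref{prop:MN}) to obtain the Lipschitz estimate $\le M_T|t-s|\,\n i\n^2\n B\n\,\n x\s\n^2$ uniformly over $x\s\in B_{E\s}$. The continuous $E$-valued modification is then produced not by a vector-valued Kolmogorov theorem but by Gaussian-process theory: Carmona's result for the two-parameter Gaussian process indexed by $[0,T]\times B_{E\s}$ combined with Fernique's theorem. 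To salvage your strategy you would have to either add the strong Feller/null-controllability hypothesis (which the theorem deliberately avoids) or replace the $L^p(\Omega;E)$ moment estimates by an argument that, like the paper's, needs only the weak increment bound.
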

\begin{proof}
Without loss of generality we may assume that $x_0=0$.

On a possibly larger probability space,
let $Y_\infty$ be a centred $E$-valued Gaussian random variable,
independent of $W^H$, with distribution
$\mui$. Then the process $Z=(Z(t))_{t\ge 0}$ defined by
$ Z(t) = S(t)Y_\infty + X^0(t)$
is stationary; as before $X^0$ denotes the mild solution with initial value $0$.
By the strong continuity of the semigroup $S$, the process $X^0$ has a continuous
modification if and only if this is true for $Z$.

For $t \ge s\ge 0$ we have
$$
\begin{aligned}
\E \lb Z(t), x\s\rb\lb Z(s), x\s\rb & =
\E \lb Z(0), x\s\rb\lb Z(t-s), x\s\rb \\ & =
\E \lb Y_\infty, x\s\rb\lb S(t-s)Y_\infty + X^0(t-s),x\s\rb \\ &  =
\E \lb Y_\infty, x\s\rb\lb S(t-s)Y_\infty, x\s\rb \\ & =
\lb \Qi S\s(t-s)x\s, x\s\rb,
\end{aligned}
$$
where we used that $Y_\infty$ and $X^0(t)$ are independent for every $t\ge 0$.

For $t\ge s\ge 0 $, by the above we have
$$
\begin{aligned}
0\le  \E \lb Z(t) - Z(s),x\s\rb^2
& = \E \lb Z(t),x\s\rb^2 + \E \lb Z(s),x\s\rb^2 -
 2 \E \lb Z(t), x\s\rb\lb Z(s),x\s\rb
\\ &  = 2\lb \Qi x\s, x\s\rb - 2 \lb \Qi x\s, S\s(t-s)x\s\rb
\\ &  = 2\lb \Qi (I - S\s(t-s))x\s,x\s\rb.
\end{aligned}
$$
The authors would like to thank Ben Go{\l}dys for showing this argument.

Since $\mui$ is a Radon measure, the closure $E_0$ of its reproducing kernel Hilbert space
$H_\infty$ in $E$ is
separable,
and we have $\mui(E_0)=1$. The invariance of $H_\infty$ under $S$ (see \cite{CG, Nee}) implies
that
also $E_0$ is invariant under $S$.
The covariance operator $Q_t$ of $\mu_t$ satisfies
$$\lb Q_t x\s,x\s\rb  = \int_0^t \n i\s S\s(s)x\s\n_H^2\,ds \le \lb \Qi x\s,x\s\rb, \quad x\s\in E\s,$$
and therefore Anderson's inequality implies
that $\mu_t(E_0) =1$ for all $t\ge 0$.
It follows that for all $t\ge 0$, $X_t\in E_0$ almost surely.
Since $H$ is contained in $E_0$ (by \cite[Proposition 2.6]{GN}),
this argument shows that without loss of generality we may assume that $E$ is
separable.

The analyticity of $P$
on $L^2(E,\mui)$ implies that the operator
$\Qi A\s x\s,$ which is
well defined on the domain $\D(A\s)$,
extends to a bounded operator from $E\s$ to $E$. In fact, we have
$$ \n \Qi A\s x\s\n \le \n i\n \n \Qi A\s x\s\n_H \le \n i\n \n B\n \n i\s x\s\n_H \le \n i\n^2 \n B\n \n x\s\n.$$
By a standard argument, this implies that for all $x\s\in E\s$,
\begin{equation}\label{eq:1}
 \E \lb Z(t) - Z(s),x\s\rb^2  =  2\lb \Qi(I-S\s(t-s))x\s, x\s\rb
\le M_T|t-s| \n i\n^2 \n B\n \n x\s\n^2,
\end{equation}
where $M_T = \sup_{0\le t\le T} \n S(t)\n$.
The process $\lb Z,x\s\rb$ being Gaussian, the Kolmogorov continuity criterion then
implies that
the process $\lb Z, x\s\rb$ has a continuous
modification.
By \eqref{eq:1} and the stationarity of $Z$, the conditions of \cite[Proposition 1]{Ca} are satisfied
and we conclude that the Gaussian process $(\lb Z(t),x\s\rb)_{(t,x\s)\in
[0,T]\times B_{E\s}}$ has a continuous modification.
The existence of a continuous modification of $(Z(t))_{t\in [0,T]}$ then follows from
\cite[Theorem 1.2]{Fer}.
\end{proof}

\begin{remark}
The problem of existence of a continuous version for the mild solution of \eqref{SPDE}
has been discussed by many authors. If the inclusion mapping $i:H\to E$ is $\gamma$-radonifying
(if $E$ is a Hilbert space, this is equivalent to $I$ being Hilbert-Schmidt),
a continuous version exists if $E$ has type $2$; this follows by the factorization method of
Da Prato, Kwapie\'n, and Zabczyk \cite{DKZ}. For Hilbert spaces $E$, the result is due to Smole\'nski \cite{Sm};
the type $2$ case follows from Millet and Smole\'nski \cite{MS} combined with a result of Rosi\'nski and Suchanecki
\cite{RS} (see also \cite{NW}).
The special case with $E$ a Hilbert space had been treated before by \cite{Sm}.   
In the general case considered here ($E$ an 
arbitrary Banach space, $i:H\to E$ bounded and injective) a continuous version exists
if $S$ is analytic on $E$ \cite{BN}. Analyticity of $P$ does not in general imply
analyticity of $S$ (a counterexample can be found in  \cite{MN-gradient}), so our Theorem
\ref{thm:cont-version}
is not contained as a special case in the result in \cite{BN}. In the converse direction we mention that 
neither does the analyticity of $S$ imply that of $P$; a counterexample is due to Fuhrman \cite{Fu}.
\end{remark}

\begin{remark}
Examples of `Ornstein-Uhlenbeck like' processes without continuous version are presented in 
\cite{DKZ, Iscoe}; in both references, these processes arise as mild solutions of an equation
of the form $$dU = AU(t)\,dt + B\,dW^H$$
with an {\em unbounded} densely defined closed linear operator $B: \D(B)\subseteq H\to E$.
\end{remark}

In the rest of this paper, we will always work with a continuous version of $X$
whose existence is guaranteed by Theorem \ref{thm:cont-version}.

We continue with two almost everywhere convergence results.
The first concerns the behaviour of $P(t)$ as $t\downarrow 0$. It follows from
the $L^p$-bound\-ed\-ness of the maximal function
$$ Mf(x):= \sup_{t>0} |P(t)f(x)|;$$
see \cite{Cow83}
 and \cite[Proposition 8.5]{MN2} (where the present setting
is considered).

\begin{theorem} Let Assumptions \ref{ass:1} and \ref{ass:2} be satisfied and let $1<p<\infty$.
For all $f\in
L^p(E,\mu_\infty)$ we have
$$ \lim_{t\downarrow 0} P(t)f(x) = f(x) \ \ \hbox{ for $\mu_\infty$-almost all $x\in E$.}
$$
\end{theorem}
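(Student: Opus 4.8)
The plan is to deduce the almost everywhere convergence $\lim_{t\downarrow 0}P(t)f(x)=f(x)$ from the weak $(1,1)$ or strong $L^p$ boundedness of the maximal operator $Mf(x):=\sup_{t>0}|P(t)f(x)|$, via the standard density argument. First I would record the key ingredient: that $M$ is bounded on $L^p(E,\mui)$ for $1<p<\infty$. The cited reference \cite[Proposition 8.5]{MN2} establishes exactly this in the present non-symmetric analytic setting (it rests on the transference/Cowling-type argument in \cite{Cow83}, which only needs that $(P(t))_{t\ge 0}$ is an analytic contraction semigroup on $L^p$, a property guaranteed here by Assumption \ref{ass:2} together with the remark that analyticity on $L^2$ forces analyticity on all $L^p$, $1<p<\infty$). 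So the proof begins: ``By \cite{Cow83} and \cite[Proposition 8.5]{MN2}, the maximal operator $M$ is bounded on $L^p(E,\mui)$.''

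Next I would identify a dense subclass of $L^p(E,\mui)$ on which the pointwise convergence holds trivially. The natural choice is the smooth cylindrical functions $\F C_{\rm b}^1(E,\D(U))$ (or even $\F C_{\rm b}^0$), which are dense in $L^p(E,\mui)$. For such an $f=\varphi(\phi_{h_1},\dots,\phi_{h_n})$ with $\varphi\in C_{\rm b}(\R^n)$, the strong continuity of the $C_0$-semigroup $P$ on $L^p(E,\mui)$ gives $P(t)f\to f$ in $L^p$; moreover, bounded continuous cylindrical functions are genuine everywhere-defined bounded Borel functions, so $P(t)f(x)=\E f(X^x(t))$ is well defined pointwise, and by the dominated convergence theorem applied to $X^x(t)=S(t)x+\int_0^t S(t-s)\,dW^H(s)\to x$ in probability (indeed in $L^2(\Omega;E)$, using strong continuity of $S$ and that the stochastic convolution tends to $0$), together with continuity and boundedness of $f$, one gets $P(t)f(x)\to f(x)$ for \emph{every} $x\in E$. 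Alternatively one may simply note $P(t)f\to f$ in $L^p$ implies convergence along a subsequence $\mui$-a.e., which suffices for the density argument. Either way, the convergence is settled on a dense set.

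Finally I would run the Banach-principle argument. Fix $f\in L^p(E,\mui)$ and $\e>0$; choose $g$ in the dense class with $\n f-g\n_{L^p}<\e$. Writing
$$\limsup_{t\downarrow 0}|P(t)f(x)-f(x)|\le \limsup_{t\downarrow 0}|P(t)(f-g)(x)| + \limsup_{t\downarrow 0}|P(t)g(x)-g(x)| + |g(x)-f(x)|,$$
the middle term vanishes $\mui$-a.e.\ by the previous step, so the oscillation function $\Lambda f(x):=\limsup_{t\downarrow 0}|P(t)f(x)-f(x)|$ satisfies $\Lambda f(x)\le M(f-g)(x)+|f(x)-g(x)|$ $\mui$-a.e. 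Then by the maximal inequality and Chebyshev,
$$\mui\bigl(\{\Lambda f>\la\}\bigr)\le \mui\bigl(\{M(f-g)>\la/2\}\bigr)+\mui\bigl(\{|f-g|>\la/2\}\bigr)\le \frac{C_p^p + 1}{(\la/2)^p}\,\e^p.$$
Letting $\e\downarrow 0$ gives $\mui(\{\Lambda f>\la\})=0$ for every $\la>0$, hence $\Lambda f=0$ $\mui$-a.e., which is precisely the claim. The only genuinely substantive input is the $L^p$-boundedness of $M$, which is quoted; the rest is the routine maximal-function-to-a.e.-convergence transfer, so I do not anticipate a serious obstacle beyond making sure the dense class is handled cleanly (the cleanest route being the subsequence-from-$L^p$-convergence one, which avoids any pointwise discussion of the stochastic convolution altogether).
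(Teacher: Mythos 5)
Your proposal follows essentially the same route as the paper: the paper does not write out a proof at all but simply observes that the statement follows from the $L^p$-boundedness of the maximal function $Mf=\sup_{t>0}|P(t)f|$, quoting \cite{Cow83} and \cite[Proposition 8.5]{MN2}, and your write-up is exactly the standard maximal-inequality-plus-dense-class transfer that this citation presupposes.

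One caution about your closing remark: the ``cleanest route'' of extracting a $\mu_\infty$-a.e.\ convergent subsequence from $P(t)g\to g$ in $L^p$ does \emph{not} suffice. The oscillation estimate $\Lambda f\le M(f-g)+|f-g|+\Lambda g$ requires $\Lambda g=\limsup_{t\downarrow 0}|P(t)g-g|=0$ a.e., i.e.\ the full limit as $t\downarrow 0$ on the dense class, and a.e.\ convergence along one sequence $t_n\downarrow 0$ gives no control on this limsup. So you must use your first argument: take the dense class inside $C_{\rm b}(E)$ (or cylindrical functions built on $\phi_{h}$ with $h=i_\infty^*x^*$, $x^*\in E^*$, which are everywhere defined and continuous; for general $h\in H_\infty$ the function $\phi_h$ is only $\mu_\infty$-a.e.\ defined), and obtain $P(t)g(x)\to g(x)$ for every $x$ from $X^x(t)\to x$ --- most cleanly via the continuous modification provided by Theorem \ref{thm:cont-version} together with dominated convergence. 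With that choice of dense class the Banach-principle argument you give is complete and matches the intended proof.
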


The second result concerns the behaviour of $P(t)$ as $t\to\infty$.
Below we shall only need the part (1) (with $p=2$) (see also
\cite[Proposition 10.1.1]{DZ1} for a partial result in this direction).

\begin{theorem}\label{thm:mu-as}
Fix $1\le p<\infty$.
\ben
\item[\rm(1)] If Assumption  \ref{ass:1} is satisfied, then
for all $f\in L^p(E,\mu_\infty)$ we have
$$ \lim_{t\to\infty} P(t)f = \int_E f\,d\mu_\infty \ \ \hbox{in $L^p(E,\mu_\infty)$.}$$
\item[\rm(2)] If Assumptions \ref{ass:1} and \ref{ass:2} are satisfied,
then for all $f\in L^p(E,\mu_\infty)$ we have
$$ \lim_{t\to\infty} P(t)f(x) = \int_E f\,d\mu_\infty \ \ \hbox{
for $\mu_\infty$-almost all $x\in E$.}$$
\een
\end{theorem}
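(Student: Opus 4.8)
I would prove (1) first and then bootstrap to (2).

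For (1), I would test against the bounded functions $e_{x\s}(x):=\exp(i\lb x,x\s\rb)$, $x\s\in E\s$ (passing to complexifications throughout; real and imaginary parts are taken at the end). Since the stochastic integral in \eqref{eq:mild} is centred Gaussian with variance $\int_0^t\n i\s S\s(s)x\s\n_H^2\,ds=\lb Q_t x\s,x\s\rb$, a direct computation gives
\[ P(t)e_{x\s}(x)=\exp\big(i\lb x,S\s(t)x\s\rb\big)\,\exp\big(-\tfrac12\lb Q_t x\s,x\s\rb\big).\]
As $t\to\infty$ one has $\lb Q_t x\s,x\s\rb\uparrow\lb\Qi x\s,x\s\rb$, while
\[ \int_E|\lb x,S\s(t)x\s\rb|^2\,d\mui(x)=\lb\Qi S\s(t)x\s,S\s(t)x\s\rb=\int_t^\infty\n i\s S\s(s)x\s\n_H^2\,ds\longrightarrow 0,\]
so $\lb\,\cdot\,,S\s(t)x\s\rb\to 0$ in $\L$ and, since $|e^{ia}-1|\le\min(|a|,2)$, also $e_{S\s(t)x\s}\to\one$ in $\L$. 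Hence $P(t)e_{x\s}\to\big(\int_E e_{x\s}\,d\mui\big)\one$ in $\L$, and --- these functions being bounded --- in every $L^p(E,\mui)$. Since their linear span is dense in $L^p(E,\mui)$ and the operators $P(t)$ are contractions there, a three-$\e$ argument (using $\mui(E)=1$ to bound $|\int_E(f-g)\,d\mui|$ by $\n f-g\n_{L^p(E,\mui)}$) yields (1).

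For (2) I would combine the $L^p$-boundedness of the maximal function $Mf:=\sup_{t>0}|P(t)f|$ (see \cite{Cow83} and \cite[Proposition 8.5]{MN2}) with a direct argument on a dense class. A standard consequence of that maximal inequality is that the set $\calC_p$ of all $f\in L^p(E,\mui)$ with $P(t)f(x)\to\int_E f\,d\mui$ for $\mui$-a.e.\ $x$ is closed in $L^p(E,\mui)$, $1<p<\infty$; since $\mui$ is a probability measure it then suffices to treat $p=2$ (for $1<p<2$ combine density of $\L$ in $L^p(E,\mui)$ with boundedness of $M$ there; for $p\ge 2$, $L^p(E,\mui)\subseteq\L$; the case $p=1$ is analogous, using in addition a weak type $(1,1)$ bound for $M$), and hence only to exhibit a dense subclass of $\L$ lying in $\calC_2$. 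I would take $\mathcal D:=\R\one+\Ran(L)$, where $L$ is the generator of $P$ on $\L$. This is where (1) enters decisively: if $g\in\L$ satisfies $g\perp\one$ and $g\perp\Ran(L)$, then $g\in\D(L\s)$ with $L\s g=0$, so $P\s(t)g=g$, whence $\lb g,h\rb=\lb g,P(t)h\rb\to\lb g,(\int_E h\,d\mui)\one\rb=(\int_E h\,d\mui)\lb g,\one\rb=0$ for every $h\in\L$ by (1); thus $g=0$ and $\mathcal D$ is dense.

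The remaining point is to check $\mui$-a.e.\ convergence of $P(t)f(x)$ to $\int_E f\,d\mui$ for $f\in\mathcal D$. Constants are trivial, so let $f=Lg$ with $g\in\D(L)$; then $\int_E f\,d\mui=0$ by \eqref{eq:invariance}. By Assumption \ref{ass:2}, $P$ is a bounded analytic semigroup on $\L$ (being an analytic contraction semigroup), so $\n L^2P(t)\n_{\calL(\L)}\le Ct^{-2}$ for all $t>0$; consequently $\n LP(t)f\n_{\L}=\n L^2P(t)g\n_{\L}\le Ct^{-2}\n g\n_{\L}$, hence $\int_1^\infty\n LP(t)f\n_{\L}\,dt<\infty$ and a fortiori $\int_1^\infty\n LP(t)f\n_{L^1(E,\mui)}\,dt<\infty$. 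Writing $P(n+1)f-P(n)f=\int_n^{n+1}LP(t)f\,dt$ (a Bochner integral, valid since $P(t)f\in\D(L)$ for $t>0$) and summing over $n\in\N$, one finds that the functions $\sum_n|P(n+1)f-P(n)f|$ and $x\mapsto\int_1^\infty|LP(s)f(x)|\,ds$ lie in $L^1(E,\mui)$, hence are finite $\mui$-a.e.; therefore $(P(n)f(x))_n$ converges for $\mui$-a.e.\ $x$, and filling in $t\in[n,n+1)$ via $|P(t)f(x)-P(n)f(x)|\le\int_n^{n+1}|LP(s)f(x)|\,ds\to 0$ (the tail of a $\mui$-a.e.\ convergent integral) shows that $\lim_{t\to\infty}P(t)f(x)$ exists $\mui$-a.e. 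By (1) this limit equals $\int_E f\,d\mui=0$ in $\L$, hence $\mui$-a.e.; so $f\in\calC_2$, which completes the proof.

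The delicate step is the pointwise --- rather than merely $L^p$, or $L^p$-modulo-the-maximal-function --- convergence in (2): no quantitative decay rate for $\n P(t)f\n_{\L}$ is available from Assumptions \ref{ass:1}--\ref{ass:2} alone, so $\int_1^\infty\n LP(t)f\n_{\L}\,dt$ need not converge for a general $f$. Restricting to $\Ran(L)$ forces $P(t)f=LP(t/2)\cdot P(t/2)g$ to carry the extra analytic-smoothing factor $t^{-1}$, which makes $\n LP(t)f\n_{\L}$ integrable in $t$; recognising $\Ran(L)$ as dense modulo constants --- which is exactly where (1) is used --- is what ties the argument together.
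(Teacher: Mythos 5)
Your argument is correct, but it takes a genuinely different route from the paper's. For part (1) the paper argues by second quantisation: under the Wiener--It\^o isometry $P(t)$ corresponds to the second quantised semigroup $\Gamma(S_\infty\s(t))$, strong stability of $S_\infty\s$ on $H_\infty$ yields strong stability of $P$ on $\L\ominus\R\mathbf{1}$, and the general exponent $p$ is then reached by interpolation and density. Your computation of $P(t)$ on the characters $e_{x\s}$ is more elementary, uses only Assumption \ref{ass:1}, and treats all $p$ at once; the only point you should make explicit is the density of the (real parts of the) span of these characters in $L^p(E,\mui)$, which follows from cylindrical approximation together with a Stone--Weierstrass type argument and is standard. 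For part (2) both proofs reduce, via the $L^p$-boundedness of the maximal function from \cite{Cow83} and \cite[Proposition 8.5]{MN2}, to exhibiting a dense class with $\mui$-a.e.\ convergence; but the classes differ. The paper takes the exponentials $K_h$, uses the second quantisation identity $P(t)K_h=K_{S_\infty\s(t)h}$, and finishes with a Borel--Cantelli argument for $\phi_{S_\infty\s(t)h}\to0$ $\mui$-a.e. You instead take $\R\mathbf{1}+\Ran(L)$, prove its density by a duality argument resting on part (1) (a nice observation: $g\perp\mathbf{1}$, $g\perp\Ran(L)$ forces $L\s g=0$, hence $P\s(t)g=g$, hence $g=0$), and obtain a.e.\ convergence from the bounded-analytic smoothing estimate $\n L^2P(t)\n\lesssim t^{-2}$ (legitimate here, since by \cite{MN} the semigroup is an analytic \emph{contraction} semigroup) by pointwise integration of $s\mapsto LP(s)f(x)$. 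Your dense-class step avoids the Gaussian/second-quantisation structure entirely, at the price of invoking analyticity twice (maximal function and smoothing); the paper's version keeps the dense class explicit and Gaussian.

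Two caveats. First, your parenthetical treatment of $p=1$ in (2) appeals to a weak type $(1,1)$ bound for $M$, which does not follow from \cite{Cow83} or \cite[Proposition 8.5]{MN2} and is not known in this generality; note that the paper's own proof of (2) fixes $1<p<\infty$, so you should either restrict in the same way or supply a genuine argument for $p=1$. Second, statements such as $|P(t)f(x)-P(n)f(x)|\le\int_n^{n+1}|LP(s)f(x)|\,ds$ for \emph{all} $t$ simultaneously require fixing a jointly measurable version of $(s,x)\mapsto LP(s)f(x)$ and reading $P(t)f(x)$, for $t\ge1$, as $P(1)f(x)+\int_1^t LP(s)f(x)\,ds$; this is routine, and the analogous convention is implicit in the paper's use of $\sup_{t>T}$, but it deserves a sentence.
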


\begin{proof}
The proof of the first statement follows by second quantisation
and using the fact
\cite[Proposition 2.4]{GN} that
$S_\infty\s$ is strongly stable. The details are as follows.

First we consider the case $p=2$. For all $h_1,\dots,h_n \in H_\infty$
we have
$$\lim_{t\to\infty} (S_\infty^*(t))^{\otimes n} (h_1\otimes\dots\otimes h_n)
= \lim_{t\to\infty} S_\infty^*(t)h_1\otimes\dots\otimes S_\infty^*(t) h_n = 0,
$$
from which it follows that $(S_\infty^*)^{\otimes n} := S_\infty^*\otimes\dots\otimes
S_\infty^*$ ($n$ times)
is strongly stable on
$H_\infty^{\otimes n}:= H_\infty\otimes\dots\otimes H_\infty$. By restricting to the symmetric tensor products
$H_\infty^{\symm n}$  and taking direct sums, it follows that
the second quantised semigroup
$$\Gamma(S_\infty\s) := \bigoplus_{n=0}^\infty (S_\infty^{\symm n})^*$$
is strongly stable on the closed subspace $ \bigoplus_{n=1}^\infty
H_\infty^{\symm n}$ of  $\bigoplus_{n=0}^\infty H_\infty^{\symm n}$. 
Under the Wiener-It\^o isometry, the latter space is mapped isometrically onto
$L^2(E,\mu_\infty)$, and the first summand $H_\infty^{\symm 0}$ is mapped
onto the one-dimensional subspace spanned by the constant function
$\bf{1}$. Moreover, under this isometry the semigroup
$\bigoplus_{n=0}^\infty (S_\infty^{\symm n})^*$ corresponds to $P$
 in the sense that the following diagram commutes:
\begin{equation*}
 \begin{CD}
    \bigoplus_{n=0}^\infty
H_\infty^{\symm n}     @> \bigoplus_{n=0}^\infty (S_\infty^{\symm n})^* >>   \bigoplus_{n=0}^\infty
H_\infty^{\symm n} \\
     @V  \simeq  VV              @VV \simeq V \\
     L^2(E,\mui)     @> P(t) >> L^2(E,\mui) 
     \end{CD}
\end{equation*}
the isomorphism on the vertical arrows being the Wiener-It\^o isomorphism
(see \cite{CG, Nee}). As a result, we find that the semigroup $P$ is
strongly stable on $L^2(E,\mu_\infty)\ominus \R \bf{1}$. Since
$P(t)\bf{1} = \bf{1}$ and $(\int_E f\,d\mu_\infty) \bf{1}$ equals the
orthogonal projection of $f$ onto $\R\bf{1}$, this gives the first
assertion for $p=2$.

Next let $2<p<\infty$ be arbitrary, and choose $p<q<\infty$ arbitrarily.
Since $P$ is contractive on $L^q(E,\mu_\infty)$,
for all $f\in L^q(E,\mu_\infty)$ we have, by convexity,
$$\bal
\Big\n P(t)f - \int_E f\,d\mu_\infty\Big\n_p
& \le \Big\n P(t)f - \int_E f\,d\mu_\infty\Big\n_2^{1-\theta}
\Big\n P(t)f - \int_E f\,d\mu_\infty\Big\n_q^\theta
\\ & \le \Big\n P(t)f - \int_E f\,d\mu_\infty\Big\n_2^{1-\theta} (2\n f\n_q)^\theta,
\eal$$
where $0<\theta<1$ satisfies $\frac{1-\theta}{2} + \frac{\theta}{q} = \frac1p.$
The right-hand side tends to $0$ as $t\to\infty$.
Since  $L^q(E,\mu_\infty)$ is dense in
$L^p(E,\mu_\infty)$ and $P$ is contractive on $L^p(E,\mu_\infty)$,
this implies the first assertion for $2<p<\infty$.

Next let $1\le p<2$. For $f\in L^2(E,\mu_\infty)$, the
$L^2$-convergence implies the $L^p$-convergence by H\"older's inequality.
Since $L^2(E,\mu_\infty)$ is dense in $L^p(E,\mu_\infty)$ and $P$
is contractive  on $L^p(E,\mu_\infty)$, this gives the first
assertion for $1\le p<2$.

For the proof of (2) we fix $1<p<\infty$. We shall identify a dense
subspace of functions for which the asserted $\mu_\infty$-almost everywhere
convergence does hold. By the $L^p$-boundedness of
the maximal function, which follows from the analyticity
of $P$ by \cite[Proposition 8.5]{MN2},
the set of all functions for which we have
$\mu_\infty$-almost everywhere
convergence is norm-closed in $L^p(E,\mu_\infty)$ and the proof is complete.

For $h\in H_\infty$ define $$K_h :=
\exp\Big(\phi_h - \frac12\n h\n_{H_\infty}^2\Big).$$
As is well-known, these functions belong to $L^p(E,\mu_\infty)$ and their
linear span is dense in $L^p(E,\mu_\infty)$.
Moreover, from the identity
$$K_h = \sum_{n=0}^\infty \frac1{n!} I_n (\phi_h^n),$$
with $I_n$ the orthogonal projection in $L^2(E,\mu_\infty)$ onto the
$n$-th Wiener-It\^o chaos, it follows that
$$\int_E {K_h}\,d\mu_\infty = 1.$$
By second quantisation,
$$ P(t) K_h = K_{S_\infty\s (t)h}.$$
The proof will be finished by observing that for $\mu_\infty$-almost
all $x\in E$ we have
$$ \lim_{t\to\infty} P(t) K_h (x) = \lim_{t\to\infty}
\exp\Big(\phi_{S_\infty\s (t)h}(x) - \tfrac12\n S_\infty\s(t)h\n_{H_\infty}^2\Big)
= 1 = \int_E {K_h}\,d\mu_\infty.
$$
In this computation we used that
$\lim_{t\to\infty} S_\infty\s(t)h = 0$ in $H_\infty$,
from which we shall deduce next that $\lim_{t\to\infty}\phi_{S_\infty\s(t)h} = 0$
$\mu_\infty$-almost surely. Once this has been shown the proof is complete.

We start by noting that $$P(t)\phi_h = \phi_{S_\infty\s(t)h}.$$
Hence by the $L^2$-boundedness of the maximal function,
$$ \big\n \sup_{t>0} |\phi_{S_\infty\s(t)h}|\big\n_{L^2(E,\mu_\infty)} \lesssim
\n h\n_{H_\infty}.$$
By the semigroup property, this implies that
$$ \big\n \sup_{t>T} |\phi_{S_\infty\s(t)h}|\big\n_{L^2(E,\mu_\infty)}
 =  \big\n \sup_{t>0} |\phi_{S_\infty\s(t+T)h}|\big\n_{L^2(E,\mu_\infty)}
 \lesssim \n S_\infty\s(T) h\n_{H_\infty}. $$
The right hand side of this expression tends to $0$ as $T\to\infty$.
Having observed this, the proof can be finished with a standard Borel-Cantelli
argument.
With Chebyshev's inequality we find times $T_n\to \infty$ such that
$$ \mu_\infty \Big(x\in E: \ \sup_{t>T_n} |\phi_{S_\infty\s(t)h}(x)| > \frac1{2^n}\Big) < \frac1{2^n}.$$
By the Borel-Cantelli lemma it follows that
$$\mu_\infty \Big(x\in E: \ \sup_{t>T_n} |\phi_{S_\infty\s(t)h}(x)| > \frac1{2^n} \ \hbox{for
infinitely many $n$}\Big) = 0.$$
Hence for $\mu_\infty$-almost all $x\in E$ we can find $n_0$ (depending on $x$) such that
$$ \sup_{t>T_n} |\phi_{S_\infty\s(t)h}(x)| \le  \frac1{2^n} \ \ \hbox{for all
$n\ge n_0$}.$$
Clearly that implies that $\lim_{t\to\infty} \phi_{S_\infty\s(t)h}(x) = 0$
for $\mu_\infty$-almost all $x\in E$.
\end{proof}

The next result is an extension of \cite[Theorem 3.3, Corollary 3.4]{CG2000},
where the stronger assumption was made
that $\n S_\infty(t)\n \le e^{-w t}$ for some $w>0$ and all $t\ge 0$.

\bth[Poincar\'e inequality] \label{thm:PoincI}
Let Assumptions \ref{ass:1} and \ref{ass:2} be satisfied.
If the semigroup $S_\infty$ is uniformly exponentially stable, then there is a constant $C$ such that
for all $\phi\in W_H^{1,2}(E,\mu_\infty)$ we have
$$ \int_E (\phi - \overline{\phi})^2\,d\mu_\infty \le C \int_E \n D^H\phi\n^2\,d\mu_\infty.$$
\eth

\begin{proof}
By \cite{MN-gradient}, $S$ restricts to a bounded analytic $C_0$-semigroup $S_H:= S|_H$
on $H$, and by \cite[Theorem 5.4]{GN} this semigroup is uniformly exponentially stable,
say $\n S_H(t)\n \le M e^{-w t}$ with $M\ge 1$ and $w>0$.

Next we note (see \cite[Theorem 5.6]{MN2}) that $P(t)f\in W_H^{1,2}(E,\mu_\infty)$ and
$ D^H P(t) f =  (P(t) \otimes S_H(t))D^H f.$ Hence, $\mu_\infty$-almost everywhere we have
\begin{equation}
 \label{eq:square}
 \n D^H P(t) f\n_H ^2 = \n P(t)\otimes S_H(t) D^H f\n_H^2 \le M^2e^{-2wt}  P(t) (\n D^H f\n_H^2).
\end{equation}
Combining \eqref{eq:square} with Proposition \ref{prop:div} and Theorem \ref{thm:mu-as},
as in \cite[Proposition 2.2(a)] {DL}
the desired result follows a method of Deuschel-Stroock \cite{DS} (following the lines of the proof of
\cite[Proposition 10.5.2]{DZ1}, using the expression for $L$ as given in \cite{MN}; this produces
the constant $M^2/2w$).
\end{proof}

\section{The Feynman-Kac semigroup on $\L$}\label{sec:FK}

In this section and the next, we extend the results of \cite[Section 3]{DL}
to the non-symmetric setting. Our proofs follows those of \cite{DL} closely,
with some modifications necessitated by the non-selfadjointness of $L$.
Another subtle difference concerns the assumptions on the domain $\OO$, which we
take to be open as in \cite{DPGZ, Talar}; 
in \cite{DL} closed domains are considered (in this connection see also Remark \ref{rem:comparison}).  
For the convenience of the reader (and for the sake of mathematical rigour) 
we have therefore decided to write out all proofs in detail.

We shall always assume that Assumptions \ref{ass:1} and \ref{ass:2}
are satisfied without repeating this at every instance.
We fix an nonempty open subset $\OO  $ in $E$ satisfying
 \[\mui(\OO)>0 \] and
a bounded continuous function  $V:E\to [0,1]$
which satisfies
\begin{equation}\label{V}
\begin{cases}
V(x)=0, & \text{$x \in \overline{\OO}$,}\\
V(x)>0, & \text{$x \in \complement \overline{\OO}$.}
\end{cases}
\end{equation}

For $f\in B_{\rm b}(E)$,  $x\in E$, and $\e>0$ set
\begin{equation}\label{eq:def-P_eps}
P_\e(t)f(x):=\E\big[f(X^x(t))e^{-\frac1 \e\int_0^tV(X^x(r))dr}\big].
\end{equation}
By standard arguments,
$P_\e = (P_\e(t))_{t\ge 0}$ is a semigroup
of linear contractions on $B_{\rm b}(E)$, the so-called {\em Feynman-Kac semigroup} associated
with $-L+\tfrac1{\e}V$.

\bpr[cf. \hbox{\cite[Proposition 3.1]{DL}}]
For all $f\in B_{\rm b}(E)$ and $\e>0$,
 \[\int_E(P_\e(t)f)^2\,d\mui\le\int_Ef^2\,d\mui.\]
As a consequence, $P_\e$ is uniquely extendable to a $C_0$-semigroup of contractions on $\L$.
\epr

\bpf
Using the Cauchy-Schwarz inequality, for $f\in B_{\rm b}(E)$ we have
\begin{align*}
(P_\e(t)f(x))^2&=(\E[f(X^x(t))e^{-\frac1 \e\int_0^tV(X^x(r))dr}])^2\\
&\le\E[f^2(X^x(t))e^{-\frac2 \e\int_0^tV(X^x(r))dr}]
\\ & \le\E[f^2(X^x(t))] = P(t)f^2(x).
\end{align*}
Integrating with respect to $\mui$ and using
\eqref{eq:invariance}, we obtain
\[
\int_E(P_\e(t)f)^2\,d\mui\le\int_EP(t)f^2\,d\mui=\int_Ef^2
\,d\mui.\]
This shows that the operators $P_\e(t)$ are contractive on $\L$. To see that
the resulting semigroup $P_\e$ is strongly continuous, note that
for all $f\in C_{\rm b}(E)$ the mapping $t\mapsto P_\e(t)f(x)$
is continuous for each $x\in E$ by the path continuity of $t\mapsto X^x(t)$.
Hence, by dominated convergence, $\lim_{t\downarrow 0} P_\e(t)f = f$ in $\L$
for all $f\in C_{\rm b}(E)$. By density and uniform boundedness, the
strong continuity of $P_\e$ follows from this.
\epf

From now on, unless stated otherwise, we shall denote by $P_\e$ the $C_0$-semigroup of
contractions on $L^2(E,\mui)$ whose existence is assured by the above proposition.
Our next aim is to identify $L-\frac 1 \e V$ as its generator.

For fixed $\la>0$ and $f\in \L$, let us consider the resolvent equation
\beq\label{eq}
\la\phi_\e - L\phi_\e+\frac1 \e V\phi_\e=f.
\eeq

\bpr[cf. \hbox{\cite[Proposition 3.2]{DL}}]
The equation (\ref{eq}) has a unique solution $\phi_\e\in \D(L)$ and the
following estimates hold:
\begin{align}\label{R}
\int_E\phi_\e^2\,d\mui & \le \frac1 {\la^2}\int_Ef^2\,d\mui,
\\
\label{DR}
\int_E\n D^H\phi_\e\n_H^2\,d\mui & \le\frac 2 \la \int_Ef^2\,d\mui,
\\
\label{VR}
\int_{E }\phi_\e^2 V\,d\mui & \le\frac \e \la \int_Ef^2\,d\mui,
\\
\label{DV}
\int_{E }\n D^H\phi_\e\n_H^2V \,d\mui & \le\frac {\e^{1/2}} {\la^{1/2}} \int_Ef^2\,d\mui.
\end{align}
\epr
\bpf
We know that the form $l$ defined in \eqref{eq:l} is closed, densely defined, sectorial
and accretive.  Since
\[|[\frac1 \e Vf,f]_\L|\le
\frac 1 \e \n V \n_\ii\n f\n^2_{\L} ,\]
the KLMN theorem (see \cite[Theorem VI.1.33]{K}) shows that the form associated
to $-L+\frac1 \e V$ is closed, densely defined, and sectorial. It is also
accretive since $-L+\frac 1 \e V \ge -L\ge 0$. Therefore, $-L+\frac 1 \e V$ is
maximal accretive,
and (\ref{eq}) has a unique solution $\phi_\e\in \D(L)$.
Thus
\[\int_E\phi_\e^2\,d\mui=\big\n (\la-L+\frac1\e V)^{-1}f\big\n^2_{\L}\le
\frac1{\la^2}\n f\n_{L^2(E,\mui)}^2 =
 \frac1
{\la^2}\int_Ef^2\,d\mui.\]
Let us now multiply  both sides of (\ref{eq}) by $\phi_\e$ and
integrate over $E$:
\beq\label{eq2}
\la\int_E\phi_\e^2\,d\mui-\int_E L\phi_\e\cdot \phi_\e \,d\mui+\frac1 \e
\int_EV\phi_\e^2\,d\mui=\int_Ef\phi_\e \,d\mui.
\eeq
Since $[Bu,u]=\frac1 2\n u \n_H^2,$
 \[-\int_E L\phi_\e\cdot \phi_\e\,d\mui=\int_E[BD^H\phi_\e,D^H\phi_\e]
\,d\mui=\frac12\int_E\n D^H\phi_\e\n_H^2\,d\mui.\] Substituting this identity in
(\ref{eq2}) yields
\begin{align*}
\frac 1 2 \int_E\n D^H\phi_\e\n_H^2\,d\mui&\le \int_Ef\phi_\e \,d\mui\\
&\le\Big(\int_Ef^2\,d\mui\Big)^{1/2}\Big(\int_E\phi_\e^2\,d\mui\Big)^{1/2}
\le\frac 1 \la \int_Ef^2\,d\mui,
\end{align*}
where we used the Cauchy-Schwarz inequality and (\ref{R}). 
 
We also notice from
(\ref{eq2}) that
\[\frac 1 \e \int_EV\phi_\e^2\,d\mui\le\int_E f\phi_\e \,d\mui\le\frac 1 \la
\int_Ef^2\,d\mui.\]

Next, multiplying  both sides of (\ref{eq}) by $\phi_\e V$ and integrating gives
\beq\label{eq2a}
\la\int_E\phi_\e^2 V\,d\mui-\int_E L\phi_\e\cdot \phi_\e V\,d\mui+\frac1 \e
\int_EV\phi_\e^2V\,d\mui=\int_Ef\phi_\e V\,d\mui.
\eeq
Repeating the reasoning following \eqref{eq2}, from \eqref{eq2a} we infer
\begin{align*}
\frac 1 2 \int_E\n D^H\phi_\e\n_H^2 V\,d\mui&\le \int_Ef\phi_\e V\,d\mui\\
&\le\Big(\int_Ef^2V\,d\mui\Big)^{1/2}\Big(\int_E\phi_\e^2V\,d\mui\Big)^{1/2} \\
&\le\Big(\int_Ef^2\,d\mui\Big)^{1/2}\Big(\int_E\phi_\e^2V\,d\mui\Big)^{1/2}
\le\frac {\e^{1/2}} {{\la}^{1/2}} \int_Ef^2\,d\mui.
\end{align*}
\epf

To prove that $L-\frac 1\e V $ is the generator of the semigroup
$(P_\e(t))_{t\ge0}$ we need the following result. Let
$$ \calC:={\rm span}\{P(t)f: \ t>0, \  f\in
C_{\rm b}(E)\}.$$

\ble\label{calC}
$\calC$ is a core for $\D(L)$ and we have $\calC \subseteq \D(L)\cap C_{\rm b}(E)$.
\ele
\bpf
Since $C_{\rm b}(E)$ is dense in $\L$ and contained in $\overline{\calC}$,
 $\calC$ is dense in $\L$. Since $P$ is analytic on $\L$, $\calC$ is
contained in $\D(L)$. Moreover $\calC$ is
$P$-invariant, and therefore $\calC$ is a core for $\D(L)$.
Finally, it is immediate from \eqref{eq:Pt} that $P(t)f\in C_{\rm b}(E)$
for all $t>0$ and $f\in C_{\rm b}(E)$, so $\calC\subseteq \D(L)\cap C_{\rm b}(E)$.
\epf

Let $M_\e$ be the infinitesimal generator of $P_\e$ on $L^2(E,\mui)$.

\bpr[cf. \hbox{\cite[Proposition 3.3]{DL}}] We have $\D(M_\e)=\D(L)$
and
\beq\label{Ge}
M_\e =L-\frac1\e V.
\eeq
\epr

\bpf
Let us show that $\D(L)\subseteq \D(M_\e)$
and that the identity (\ref{Ge}) holds on $\D(L)$. Then, since
both $M_\e$ and $L-\frac1\e V$ are semigroup generators, the identity $\D(M_\e)=\D(L)$ follows.

Fix $f\in
\D(L)\cap C_{\rm b}(E)$ and $x\in E$. For all $t>0$,
\begin{equation*}
\begin{aligned}
P_\e(t)f(x)-f(x)&= \E[f(X^x(t))e^{-\frac1\e\int_0^tV(X^x(s))ds}]-f(x)\\
&= \E[f(X^x(t))]-f(x)+\E[f(X^x(t))(e^{-\frac1
\e\int_0^tV(X^x(s))ds}-1)].
\end{aligned}
\end{equation*}
Dividing
both sides by $t$ and letting $t\downarrow 0$, by pathwise continuity and
dominated convergence we obtain
$$\frac1t(P_\e(t)f-f)\ra  Lf-\frac1 \e Vf$$
in $\L$. It follows that
$f\in \D(M_\e)$ and $M_\e f=Lf-\frac 1 \e V f$.

Let now $f\in \D(L)$ be arbitrary. Let $f_n\ra f$ in $\D(L)$ with $f_n\in \calC$, where $\calC$
is as in Lemma \ref{calC}.
Then  $f_n\ra f$ in $\L$, $ Lf_n\ra Lf$ in $\L$,
and $\frac 1 \e Vf_n\ra \frac 1
\e Vf$ in $\L$. Therefore $M_\e f_n=Lf_n-\frac1 \e V
f_n\ra Lf-\frac1 \e V f$ in $\L$. Since $M_\e$ is closed, this implies that $f\in \D(M_\e)$ and
$M_\e f=Lf-\frac 1 \e V f$.
\epf

\section{The semigroup $P_\OO (t)$}\label{sec:PK}

On $B_{\rm b}(\OO)$, following \cite{DPGZ, Talar} we define the
operators $P_\OO (t)$ for $t\ge0$ by 
\[P_\OO (t)f(x):=\E[f(X^x(t))\one_{\{\tau^x_{{\OO}} > t\}}], \quad x\in\OO.\]
Here, \begin{align*}\tau^x_\OO :=\inf\{t>0:\  X^x(t)\in \complement 
\OO\}
\end{align*} is the entrance time of $\complement \OO$
corresponding to the initial value $x$.
As $\tau_\OO^x>0$ for all $x\in\OO$ it is clear that $P_\OO(0)f=f$, and an easy calculation 
based on \eqref{eq:mild} 
shows that $P_\OO(t)P_\OO(s)f = P_\OO(t+s)f$ for all $t,s\ge 0.$

For $\e>0$ let $$\OO_\e := \{x\in \OO: \ d(x,\complement \OO) > \e\}.$$
Let $V_\e: \E \to [0,1]$ be the potential defined by
$$ V_\e (x) = \tfrac1\e d(x,\OO_\e) \wedge 1.$$
Note that $V_\e \equiv 0$ on $\overline{\OO}_\e$ and $V_\e \equiv 1$ on $\complement \OO$. 
In the results below, we denote by $P_\e$ the strongly continous semigroup of contractions on
$B_b(E)$ generated by $L - \frac1\e V_\e$. 

For functions $f\in B_{\rm b}(\OO)$ we define
\[\wt f(x) := \begin{cases} f(x), & x\in\OO, \\ 0, & x\in\complement{\OO}.\end{cases}\] 

\bpr[cf. \hbox{\cite[Proposition 3.5]{DL}}]\label{prop:pointwise}
 For all $f\in B_{\rm b}(\OO)$, $x\in \OO$, and $t\ge 0$,
 \[\lim_{\e\downarrow 0}P_\e(t)\wt f(x)=P_\OO (t)f (x).\]
\epr

\bpf
For $t=0$ the result is trivial, so we may assume that $t>0$. 
Fix $x\in \OO$.

On the set $\{\tau^x_{{\OO}} > t\}$ we have 
$X^x(s)\in {\OO} $ for all $s\in [0,t]$ and therefore $V_\e(X^x(s))=0$
for all $s\in [0,t]$ provided $\e>0$ is sufficiently small. 
If, on the other hand, $\o\in \{\tau^x_{{\OO}} \le t\}$, then by path continuity 
 we have
$X^x(t_0(\omega),\omega)\in \partial \OO$ for some $t_0(\o)\in (0,t]$, 
and therefore $V_\e(X^x(t_0(\omega),\omega)) = 1$ for all $\e>0$.
Hence for some small enough $\delta(\omega)>0$ we have $V_\e(X^x(s,\omega)) \ge \frac12$ for all 
$s\in [t_0(\omega)-\delta(\omega), t_0(\o)]$.
 Then
\[\int_0^tV_\e(X^x(s,\omega))  \,ds\ge \int_{t_0(\omega)-\d(\omega)}^{t_0(\omega)}V_\e(X^x(s,\omega)) \,ds  
\ge \tfrac12\d(\omega) >0,\]
and therefore
$\limsup_{\e\downarrow 0} e^{-\frac1\e\int_0^tV_\e (X^x(s,\omega)) \,ds}\le\lim_{\e\downarrow 0}
e^{-\frac{\d(\omega)}{2\e}} = 0.$

Using these facts, by dominated convergence we obtain
\begin{align*} \lim_{\e\downarrow 0} P_\e(t)\wt f(x)
& = \E[f(X^x(t))\one_{\{\tau^x_{\OO} > t\}}] 
 + \lim_{\e\downarrow 0}\int_{\{ \tau^x_{\OO} \le t\}}\wt f(X^x(t))
e^{-\frac1 \e\int_0^tV_\e(X^x(s)) \,ds} \,d\P
\\ & = P_\OO(t)f(x)+ \lim_{\e\downarrow 0}\int_{\{ \tau^x_{\OO} \le t\}}\wt f(X^x(t))
e^{-\frac1 \e\int_0^tV_\e(X^x(s)) \,ds} \,d\P
\\ & = P_\OO(t)f(x).
\end{align*}
\epf

\begin{remark} \label{rem:comparison}
The papers \cite{DL} considers closed domains $K$ are used instead of open sets $\OO$.
This has the advantage that one can work with one potential $V$ which vanishes on $K$ and is strictly
positive outside $K$.
In this setting, however, we don't see how to prove the analogue Proposition \ref{prop:pointwise} without
any assumptions on the boundary of $K$ (the problem being the identity $P_K(0)f(x) = f(x)$ for points $x\in \partial K$,
which in general need not hold).
\end{remark}

\begin{proposition}\label{prop:PKcontr}
The semigroup $P_\OO $ has a unique extension to a $C_0$-semigroup of contractions on $\LK$.
\end{proposition}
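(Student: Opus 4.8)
The statement to prove is that $P_\OO$ extends uniquely to a $C_0$-semigroup of contractions on $\LK := L^2(\OO,\mui)$. The natural strategy is to transfer the contractivity estimate already available for the Feynman-Kac semigroups $P_\e$ on $\L$ (the $L^2(E,\mui)$ bound $\int_E (P_\e(t)f)^2\,d\mui \le \int_E f^2\,d\mui$) to $P_\OO$ via the pointwise convergence $P_\e(t)\wt f(x) \to P_\OO(t)f(x)$ established in Proposition \ref{prop:pointwise}. First I would take $f\in B_{\rm b}(\OO)$ and form $\wt f\in B_{\rm b}(E)$, which satisfies $\int_E \wt f^2\,d\mui = \int_\OO f^2\,d\mui$. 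Applying the $P_\e$-contractivity to $\wt f$ gives $\int_E (P_\e(t)\wt f)^2\,d\mui \le \int_\OO f^2\,d\mui$ for every $\e>0$, uniformly in $\e$. Then, since $P_\e(t)\wt f(x) \to P_\OO(t)f(x)$ pointwise on $\OO$ as $\e\downarrow 0$, Fatou's lemma (applied on $\OO$, where $|P_\e(t)\wt f|\le \n f\n_\infty$ so there are no integrability issues) yields
\[
\int_\OO (P_\OO(t)f)^2\,d\mui \le \liminf_{\e\downarrow 0}\int_\OO (P_\e(t)\wt f)^2\,d\mui \le \int_\OO f^2\,d\mui.
\]
Hence each $P_\OO(t)$ is a contraction on $B_{\rm b}(\OO)$ in the $L^2(\OO,\mui)$-norm, and since $B_{\rm b}(\OO)$ is dense in $\LK$, $P_\OO(t)$ extends uniquely to a contraction on $\LK$; the semigroup property $P_\OO(t)P_\OO(s)=P_\OO(t+s)$ and $P_\OO(0)=I$, already noted at the operator level on $B_{\rm b}(\OO)$, pass to the extensions by density and uniform boundedness.

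\textbf{Strong continuity.} This is the step I expect to require the most care. It suffices to show $\lim_{t\downarrow 0}\n P_\OO(t)f - f\n_{\LK} = 0$ for $f$ in a dense subset, say $f\in C_{\rm b}(\OO)$ with compact support in $\OO$ (or more generally $f\in C_{\rm b}(\OO)$), and then invoke the uniform boundedness of $(P_\OO(t))_{t\ge 0}$ on $\LK$. For fixed $x\in\OO$ we have $\tau^x_\OO > 0$ almost surely (since $\OO$ is open and $X^x$ is continuous with $X^x(0)=x\in\OO$), so $\one_{\{\tau^x_\OO > t\}}\to 1$ a.s.\ as $t\downarrow 0$, and by path continuity $f(X^x(t))\to f(x)$ a.s.; by dominated convergence $P_\OO(t)f(x)\to f(x)$ pointwise on $\OO$. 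Since $|P_\OO(t)f(x)|\le \n f\n_\infty$ uniformly, a further application of dominated convergence in $x$ over $\OO$ (using $\mui(\OO)<\infty$, indeed $\le 1$) gives $P_\OO(t)f\to f$ in $\LK$. This establishes strong continuity on the dense set $C_{\rm b}(\OO)$, hence on all of $\LK$.

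\textbf{Main obstacle.} The only genuinely delicate point is justifying the pointwise convergence $P_\OO(t)f(x)\to f(x)$: one must know that for $\mui$-a.e.\ $x\in\OO$ the entrance time $\tau^x_\OO$ is strictly positive, which relies on $\OO$ being open together with the continuity of the paths of $X^x$ guaranteed by Theorem \ref{thm:cont-version}. Given that, everything else is a routine density-and-contraction argument. (One should also remark that the extension is consistent, i.e.\ the extended operators still agree with the probabilistic definition on $B_{\rm b}(\OO)\cap\LK$, which is immediate since the extension is defined precisely by continuity from $B_{\rm b}(\OO)$.) I would write the proof in the order: (i) the Cauchy--Schwarz/Fatou contractivity estimate; (ii) extension by density and verification of the semigroup identities; (iii) strong continuity via dominated convergence on $C_{\rm b}(\OO)$ and the uniform-boundedness/density principle.
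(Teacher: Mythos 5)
Your proposal is correct, and the strong-continuity part coincides with the paper's argument (pointwise convergence from $\tau^x_\OO>0$ and path continuity, then two applications of dominated convergence, then density and uniform boundedness). Where you differ is the contraction estimate: the paper proves it directly, by the conditional Cauchy--Schwarz bound
\[
(P_\OO(t)f(x))^2 \le \E\big[\wt f^2(X^x(t))\one_{\{\tau^x_\OO>t\}}\big] \le P(t)\wt f^2(x),
\]
followed by the invariance identity \eqref{eq:invariance}, whereas you detour through the Feynman--Kac semigroups $P_\e$, Proposition \ref{prop:pointwise}, and Fatou's lemma. Your route is legitimate: Proposition \ref{prop:pointwise} is proved before this statement and its proof uses only path continuity, so there is no circularity, and Fatou needs no domination. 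But note two small costs. First, the $L^2$-contractivity of $P_\e$ was stated in Section \ref{sec:FK} for potentials $V$ as in \eqref{V}, i.e.\ vanishing on all of $\overline\OO$; the potentials $V_\e$ used in Proposition \ref{prop:pointwise} vanish only on $\overline\OO_\e$, so strictly speaking you must observe that the contractivity proof uses nothing beyond $V\ge 0$ and hence applies to $V_\e$ as well. Second, the paper's one-line argument is self-contained and keeps the logical dependencies cleaner, since later results (e.g.\ Proposition \ref{appL2}) combine the contractivity of $P_\OO$ with that of $P_\e$, and it is tidier not to have derived the former from the latter. Both proofs ultimately rest on the same ingredient, the invariance of $\mui$ under $P$.
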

\bpf
First we prove that each of the operators $P_\OO(t)$ extends uniquely to a contraction on $\LK$.
By the Cauchy-Schwarz inequality, for all $f\in B_{\rm b}(\OO)$ and $x\in \OO$ we have
\begin{align*}
(P_\OO (t)f(x))^2&=(\E[\wt f(X^x(t))\one_{\{\tau^x_{{\OO}}> t  \}}])^2\\
&\le\E[\wt f^2(X^x(t))\one_{\{\tau^x_{{\OO}}> t  \}}]\le P(t)\wt{f}^2(x).
\end{align*}
Hence,
\[\int_\OO (P_\OO (t)f)^2\,d\mui\le \int_EP(t)\wt{f}^2\,d\mui=
\int_E\wt{f}^2\,d\mui= \int_\OO f^2\,d\mui.\]
This proves the asserted contractivity.

To prove strong continuity on $L^2(\OO,\mui)$, 
first let $f\in B_{\rm b}(\OO)$.
Then, by the path continuity of $X^x$,  for all $x\in\OO$  
we have $\lim_{t\downarrow 0} X^x(t) = x$ and $\tau_\OO^x >0$, and therefore
$$\lim_{t\downarrow 0} P_\OO (t)f(x)= \lim_{t\downarrow 0} \E[ f(X^x(t))\one_{\{\tau^x_{{\OO}} > t\}}] = f(x)$$
by dominated convergence. 
Again by dominated convergence, this implies that $\lim_{t\downarrow 0}  P_\OO (t)f = f$ in $L^2(\OO,\mui)$.
For general $f\in \LK$, strong continuity in $L^2(\OO,\mui)$ follows by density.
\epf

From now on, $P_\OO $ always denotes the $C_0$-semigroup of contractions
on $L^2(E,\mui)$ whose existence is assured by the proposition. We denote by $L_\OO $ its
generator.

\bpr[cf. \hbox{\cite[Proposition 3.7]{DL}}]\label{appL2}
For all $f\in \LK$ and $t>0$,
\beq\label{AS}
\lim_{\e\downarrow 0} \, (P_\e(t)\wt{f})|_\OO =P_\OO (t)f\quad {\rm in } \ \LK.
\eeq
Moreover, for all $\la>0$ with $\la\in\varrho(L-\frac 1 \e V_\e )$ we have $\la\in\varrho(L_\OO )$ and
 \beq\label{AR}
 \lim_{\e\downarrow 0}(R(\la,L-\frac1 \e V_\e )\wt f)|_\OO =R(\la,L_\OO )f\quad  {\rm in }\
\LK.
\eeq
\epr
Here, for an operator $A$ and $\la\in\varrho(A)$, $R(\la,A):= (\la-A)^{-1}$ denotes
the associated resolvent
operator.

\bpf
First let $f\in C_{\rm b}(\OO)$. Then for all $x\in \OO$ we have the pointwise bounds
$$
|P_\e(t)\wt f(x)| = |\E [\wt f(X^x(t))e^{-\frac1\e\int_0^tV_\e (X^x(s)) \,ds}]|
\le\n f\n_\ii
$$
and
$$
|P_\OO(t)f(x)|= |\E[f(X^x(t))\one_{\{ \tau^x_{{\OO}} > t\}}]|\le\n f\n_\ii.
$$
Hence by Proposition \ref{prop:pointwise} and dominated convergence theorem we obtain
$$\lim_{\e\downarrow 0} \n (P_\e(t)\wt f)|_\OO-P_\OO(t)f\n_{\LK} = 0$$ for all $f\in C_{\rm b}(E)$.
Since $P_\e$ and $P_\OO$ are contractive in $L^2(E,\mui)$ and $L^2(\OO,\mui)$, respectively,
this convergence extends to arbitrary $f\in L^2(\OO,\mui)$.

Finally, (\ref{AR}) follows from (\ref{AS}) by taking Laplace transforms.
\epf

Recalling the definition ${W}_H^{1,2}(\OO,\mui)  := \D(D^H)$, we now define
$$\mathring{W}_H^{1,2}(\OO,\mui) := \big\{f\in \LK: \ \wt f\in \D(D^H), \ D^H \wt f =0 \ 
\mui\hbox{-a.e. on }\complement \OO\big\}.$$
 Thus, $\mathring{W}_H^{1,2}(\OO,\mui)$ is the natural domain
of the part of $D^H$ in $L^2(\OO,\mui)$. We shall study this operator in more detail in the next section.

\bth[cf. \hbox{\cite[Theorem 3.8]{DL}}]
For all $\la>0$ and $f\in \LK$ we have $\phi:=R(\la,L_\OO )f \in \mathring{W}_H^{1,2}(\OO,\mui) $ and
\beq\label{WS}
\la\int_\OO \phi v\,d\mui+\int_\OO [BD ^H\phi,D ^Hv]\,d\mui=\int_\OO  fv\,d\mui\quad
\forall v\in \mathring{W}_H^{1,2}(\OO,\mui) .
\eeq
\eth

\bpf
Fix $\la>0$ and $f\in \LK$.
For $\e>0$ set $$\phi_\e:= R(\la,M_\e)\wt f =R(\la,L-\frac1\e V_\e )\wt{f}.$$
Then $\phi_\e \in \D(M_\e) = \D(L)$, so $\phi_\e \in \D(D^H) = W_H^{1,2}(E,\mui)$, and
by (\ref{R}) and (\ref{DR}) (applied to the potentials $V_\e$) we obtain
\[ \n \phi_\e\n^2_{W_H^{1,2}(E,\mui)}=\n \phi_\e\n^2_{\L}+\n D^H
\phi_\e\n^2_{\LH}\le \big(\frac{1}{\la^2}+\frac{2}{\la}\big)\n f\n^2_{\LK}.\]
Therefore there exists a sequence
$\e_j\to 0$ and a function $\psi\in W_H^{1,2}(E,\mui)$ such that 
$\phi_{\e_j}\ra\psi$ weakly in $W_H^{1,2}(E,\mui)$ as $j\ra\ii$. Let us prove
that $\psi=\wt{\phi}$.

For every $g\in \LK$, by (\ref{AR}) we have
\[\int_\OO\psi g\,d\mui
=\lim_{j\to\infty}
\int_\OO\phi_{\e_j}g\,d\mui
=\int_\OO\phi g\,d\mui.\]
Thus $\psi|_\OO=\phi$. Next we want to prove that $\psi|_{\complement \OO}=0$. The weak
convergence $\phi_{\e_j}\to \psi$ in $W_H^{1,2}(E,\mui)$ implies weak convergence in $L^2(E,\mui)$
and hence in $L^2(\complement\OO,\mui)$. Recalling 
that $V_\e \equiv 1$ on $\complement \OO$, we obtain 
\[\int_{\complement \OO}\psi^2 \,d\mui=\lim_{j\to\infty}
\int_{\complement \OO}\phi_{\e_j}\psi   \,d\mui =\lim_{j\to\infty}
\int_{\complement \OO}\phi_{\e_j}\psi V_{\e_j}  \,d\mui.\]
Using (\ref{VR}),
\begin{align*}
\Big|\int_{\complement \OO}\phi_{\e_j}\psi
V_{\e_j}  \,d\mui\Big|&\le\Big(\int_{\complement \OO}|\phi_{\e_j}|^2 V_{\e_j} 
\,d\mui\Big)^{1/2}\Big(\int_{\complement \OO}|\psi|^2 V_{\e_j}  \,d\mui\Big)^{1/2}\\
&\le\Big(\frac{\e_j}{\la}\int_E|\wt{f}|^2\,d\mui\Big)^{1/2}\Big(\int_{\complement \OO}|\psi|^2
V_{\e_j}  \,d\mui\Big)^{1/2}.
\end{align*}
Upon letting $j\ra\infty$, we obtain that $\psi|_{\complement \OO}=0$ $\mui$-almost everywhere.

By what has been proved so far,
$\phi_{\e_j}\ra\wt{\phi}$ weakly in $W_H^{1,2}(E,\mui)$. 

Next we will prove that $(D^H \wt \phi)|_{\complement \OO}=0$ $\mui$-almost everywhere.
By \eqref{DR}, the functions  $D^H \phi_\e$ are uniformly bounded in $L^2(E,\mui)$, and therefore
there exists a (possibly different) sequence
$\e_j\to 0$ and a function $\xi\in W_H^{1,2}(E,\mui)$ such that 
$D^H\phi_{\e_j}\ra\xi$ weakly in $L^2(E,\mui)$ as $j\ra\ii$. 
Then, arguing as before,
\[\int_{\complement \OO}\xi^2\,d\mui=\lim_{j\to\infty}
\int_{\complement \OO}D^H\phi_{\e_j}\xi V_{\e_j}  \,d\mui.\]
Using \eqref{DV},
\begin{align*}
\Big|\int_{\complement \OO}D^H\phi_{\e_j}\xi
V_{\e_j}  \,d\mui\Big|&\le\Big(\int_{\complement \OO}\n D^H\phi_{\e_j}\n_H^2V_{\e_j} 
\,d\mui\Big)^{1/2}\Big(\int_{\complement \OO}|\xi|^2 V_{\e_j}  \,d\mui\Big)^{1/2}\\
&\le\Big(\frac{\e_j^{1/2}}{\la^{1/2}}\int_E|\wt{f}|^2\,d\mui\Big)^{1/2}\Big(\int_{\complement \OO}|\xi|^2
\,d\mui\Big)^{1/2}.
\end{align*}
Upon letting $j\to \infty$, we obtain that $\xi|_{\complement \OO}=0$ $\mui$-almost everywhere.
Moreover, the closedness (and hence, by the Hahn-Banach theorem, weak closedness) of $D^H$
gives $ D^H \wt \phi = \xi$. This proves 
that $(D^H \wt \phi)|_{\complement \OO}=0$ $\mui$-almost everywhere.

Combining what we have proved so far, we see that $\phi\in
\mathring{W}_H^{1,2}(\OO,\mui).$ 
Next we multiply the identity $\la \phi_{\e_j} - L\phi_{\e_j} +\frac{1}{\e_j}V_{\e_j} \phi_{\e_j}=\wt{f}$ with
$\wt v$, where
$v\in \mathring{W}_H^{1,2}(\OO,\mui) $. Upon integrating over $E \setminus(\OO\setminus \OO_\e)$
and noting that $V_{\e_j}  \wt{v} \equiv 0$ on this set,  we obtain
\begin{align*}
\ & \int_{E \setminus(\OO\setminus \OO_\e)}(\la-L)\phi_{\e_j}\wt{v}\,d\mui
\\ & \ \  = \int_{E \setminus(\OO\setminus \OO_\e)}(\la-L)\phi_{\e_j}\wt{v}\,d\mui
+\frac1{\e_j} \int_{E\setminus(\OO\setminus \OO_\e)}V_{\e_j}\phi_{\e_j}\wt{v}  \,d\mui
=\int_{E \setminus(\OO\setminus \OO_\e)}\wt{f}\wt{v}\,d\mui.\end{align*}
Passing to the limit for $j\ra\infty$ and using Proposition \ref{prop:div}, we obtain 
\begin{align*}
 \la\int_{E}\phi_{\e_j}\wt{v}\,d\mui+\int_{E}[BD^H\phi_{\e_j}, D^H\wt{v}]\,d\mui
& = \int_{E}(\la-L)\phi_{\e_j}\wt{v}\,d\mui
 = \int_{E}\wt{f}\wt{v}\,d\mui.
\end{align*}
This proves (\ref{WS}).\epf

It follows from this theorem that $\D(L_\OO)\subseteq \mathring{W}_H^{1,2}(\OO,\mui)$.
 In particular,
the space $\mathring{W}_H^{1,2}(\OO,\mui)$ is dense in $\LK$.

Consider the bilinear form (recall that we work over the real scalars)
$$(f,g) \mapsto \int_\OO [BD^H f, D^H g]\,d\mui, \quad f,g \in \mathring{W}_H^{1,2}(\OO,\mui).$$
It is an easy consequence of the identity  $[Bh,h] =\frac1 2\n h\n_H^2$ 
(see Proposition \ref{prop:MN})
that this form is densely defined, continuous, accretive, and closed.
Arguing as in \cite[Proposition 4.3]{MN2} we see that it is in fact sectorial, and therefore 
we can define a closed densely defined operator  $-M_\OO$, which we will call the {\em 
Dirichlet Ornstein-Uhlenbeck
operator}, on $L^2(\OO,\mui)$ with this form in the usual way (see \cite[Section 1.2.3]{O}), and  
$M_\OO$ generates a strongly continuous analytic contraction semigroup
on $\LK$.

\begin{theorem} 
We have $L_\OO = M_\OO$. As a consequence, 
the semigroup $P_\OO$ is a strongly continuous
analytic contraction semigroup on $\LK$.
\end{theorem}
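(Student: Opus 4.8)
The strategy is to show that the resolvents of $L_\OO$ and $M_\OO$ coincide on a dense set, which forces $L_\OO = M_\OO$ since both are closed operators. First I would fix $\la > 0$ large enough that $\la \in \varrho(M_\OO)$ (the form being sectorial and accretive, all $\la > 0$ work); by Proposition~\ref{appL2} we may also assume $\la \in \varrho(L - \tfrac1\e V_\e)$ for all small $\e$, and we already know $\la \in \varrho(L_\OO)$. Given $f \in \LK$, set $\phi := R(\la, L_\OO)f$. The previous theorem tells us that $\phi \in \mathring{W}_H^{1,2}(\OO,\mui)$ and that
\[
\la \int_\OO \phi v\,d\mui + \int_\OO [BD^H\phi, D^H v]\,d\mui = \int_\OO fv\,d\mui
\]
for every $v \in \mathring{W}_H^{1,2}(\OO,\mui)$. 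But this is precisely the statement that $\phi$ solves the variational problem defining $R(\la, M_\OO)f$: indeed, $-M_\OO$ is by construction the operator associated with the form $a(f,g) = \int_\OO [BD^H f, D^H g]\,d\mui$ on $\mathring{W}_H^{1,2}(\OO,\mui)$, which means $\phi \in \D(M_\OO)$ with $(\la - M_\OO)\phi = f$ if and only if $\phi \in \mathring{W}_H^{1,2}(\OO,\mui)$ and $\la(\phi,v) + a(\phi,v) = (f,v)$ for all $v$ in the form domain. Hence $\phi = R(\la, M_\OO)f$.

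Since $f \in \LK$ was arbitrary, $R(\la, L_\OO) = R(\la, M_\OO)$ as bounded operators on $\LK$ for this (and hence, by the resolvent identity, every) $\la$ in the common resolvent set. Two generators of $C_0$-semigroups with a common resolvent value are equal, so $L_\OO = M_\OO$.

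For the consequence: $M_\OO$ was defined via a densely defined, continuous, accretive, \emph{sectorial} closed form, and by the standard theory (as cited, \cite[Section~1.2.3]{O}) such a form generates a strongly continuous analytic contraction semigroup. Since $L_\OO = M_\OO$, the semigroup $P_\OO$ generated by $L_\OO$ coincides with this analytic contraction semigroup, which gives the claim.

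**Main obstacle.** The only genuinely delicate point is already handled by the preceding theorem: verifying that $R(\la, L_\OO)f$ lies in the \emph{Dirichlet} space $\mathring{W}_H^{1,2}(\OO,\mui)$ rather than just in $W_H^{1,2}(E,\mui)$, i.e.\ that both $\wt\phi$ and $D^H\wt\phi$ vanish on $\complement\OO$ — this is the content of the estimates \eqref{VR} and \eqref{DV} together with the weak-compactness argument in the proof of that theorem. Given that, the present proof is essentially a bookkeeping identification of two form-defined operators, and I expect no further difficulty; one should only be slightly careful that the form used to define $M_\OO$ has domain exactly $\mathring{W}_H^{1,2}(\OO,\mui)$ (with the graph norm of $D^H$), so that the test-function class in \eqref{WS} matches the form domain precisely.
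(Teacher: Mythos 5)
Your proposal is correct and follows essentially the same route as the paper: both use the variational identity \eqref{WS} to recognize $\phi=R(\la,L_\OO)f$ as the element associated to $f$ by the sectorial form defining $M_\OO$, and then conclude $L_\OO=M_\OO$ from the standard fact that two semigroup generators agreeing through a common resolvent (equivalently, one extending the other) must coincide. The paper phrases this as the inclusion $L_\OO\subseteq M_\OO$ plus the generator argument, while you phrase it as equality of resolvents at one $\la>0$; these are the same argument, and your attention to the form domain being exactly $\mathring{W}_H^{1,2}(\OO,\mui)$ matches the paper's setup.
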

\begin{proof}
Using the notation of the previous proposition, from (\ref{WS}) it follows that
if $f\in \LK$ and $\lambda>0$, then $
\phi = R(\lambda,L_\OO)f\in \D(M_\OO)$ and
$$ \la \phi - M_\OO \phi = f = \la \phi - L_\OO \phi.$$
It follows that $\D(L_\OO) \subseteq \D(M_\OO)$ and that
$L_\OO= M_\OO$ on $\D(L_\OO)$. Since both operators are semigroup generators,
this implies that $\D(L_\OO) = \D(M_\OO)$ and
$L_\OO=M_\OO$.
\end{proof}

We conclude this section with a gradient
estimate for non-symmetric Ornstein-Uhlenbeck semigroups.
Da Prato and Lunardi studied the symmetric case (see \cite[Section 3.3, consequence (iii), and Proposition 3.9]{DL}).

\begin{theorem}[Gradient estimates] 
For all $f\in \L$
\[\n D^HP_\e(t)f\n_{\LH}\le \frac{C}{\sqrt{t}}\n f\n_{\L},\]
and for all $f\in \LK$
\[\n D ^HP_\OO (t)f\n_{\LKH}\le \frac{C}{\sqrt{t}}\n f\n_{\LK}.\]
\end{theorem}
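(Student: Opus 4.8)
The plan is to derive both gradient estimates from the analyticity of the corresponding semigroups together with the form identity, using the standard fact that for an analytic semigroup $e^{tG}$ whose generator $G$ has form $\mathfrak{a}$, one has a uniform bound on $\sqrt{t}\,\|\sqrt{-G}\,e^{tG}\|$ and that $\|\sqrt{-G}u\|^2 \simeq \Re\,\mathfrak{a}(u,u)$ on the form domain. First I would treat the Feynman-Kac semigroup $P_\e$. By the KLMN argument used in the proof of Proposition~3.8-style estimates, the operator $L - \frac1\e V_\e = M_\e$ is maximal accretive and its associated closed sectorial form is $\mathfrak{a}_\e(f,g) = [BD^Hf, D^Hg]_{\L} + \frac1\e\int_E V_\e fg\,d\mui$ with form domain $\D(D^H) = W_H^{1,2}(E,\mui)$; since $0 \le V_\e \le 1$ and $[Bh,h] = \tfrac12\|h\|_H^2$, we get $\Re\,\mathfrak{a}_\e(f,f) \ge \tfrac12\|D^Hf\|_{\LH}^2$. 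Analyticity of $P_\e$ (which holds because $M_\e$ is a bounded perturbation of the generator $L$ of the analytic semigroup $P$, or directly because $\mathfrak{a}_\e$ is sectorial) gives $\|\sqrt{-M_\e}\,P_\e(t)\|_{\calL(\L)} \le C/\sqrt t$ for $t>0$, with $C$ independent of $\e$ provided the sectoriality constants of $\mathfrak{a}_\e$ are uniform in $\e$ — which they are, again because of the uniform lower bound and the uniform bound $\|\frac1\e V_\e f\|\cdots$ is handled inside the form, not as a perturbation. Then for $f \in \L$, writing $u = P_\e(t)f$,
\[
\tfrac12\|D^H P_\e(t)f\|_{\LH}^2 \le \Re\,\mathfrak{a}_\e(u,u) = \|\sqrt{-M_\e}\,u\|_{\L}^2 = \|\sqrt{-M_\e}\,P_\e(t)f\|_{\L}^2 \le \frac{C^2}{t}\|f\|_{\L}^2,
\]
which gives the first inequality after relabelling $C$.

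For the second inequality I would repeat the argument verbatim for the Dirichlet Ornstein-Uhlenbeck operator $M_\OO = L_\OO$, whose associated closed sectorial form is $(f,g) \mapsto \int_\OO [BD^Hf, D^Hg]\,d\mui$ on $\mathring{W}_H^{1,2}(\OO,\mui)$, as established just above in the excerpt. Again $[Bh,h] = \tfrac12\|h\|_H^2$ gives the coercivity $\Re\,\mathfrak{a}_\OO(f,f) \ge \tfrac12\|D^Hf\|_{\LKH}^2$, and the analyticity of $P_\OO$ on $\LK$ (established in the preceding theorem) yields $\|\sqrt{-L_\OO}\,P_\OO(t)\|_{\calL(\LK)} \le C/\sqrt t$. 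For $f \in \LK$, putting $u = P_\OO(t)f$ we have $u \in \D(L_\OO) \subseteq \mathring{W}_H^{1,2}(\OO,\mui)$, so
\[
\tfrac12\|D^H P_\OO(t)f\|_{\LKH}^2 \le \Re\,\mathfrak{a}_\OO(u,u) = \|\sqrt{-L_\OO}\,P_\OO(t)f\|_{\LK}^2 \le \frac{C^2}{t}\|f\|_{\LK}^2,
\]
giving the claimed bound.

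The step I expect to require the most care is the claim that the analyticity bound $\sqrt{t}\,\|\sqrt{-G}\,e^{tG}\|$ is \emph{uniform in $\e$} for the Feynman-Kac family; the constant in the general bound for analytic semigroups depends only on the sectoriality angle and the numerical-range-type constants of the form, and since $\Re\,\mathfrak{a}_\e \ge \tfrac12\|D^H\cdot\|^2$ and the imaginary part of the potential term vanishes (the potential is real and nonnegative), the angle of $\mathfrak{a}_\e$ is controlled by that of $(f,g)\mapsto[BD^Hf,D^Hg]$ alone, uniformly in $\e$. One should also note that the passage $\Re\,\mathfrak{a}(u,u) = \|\sqrt{-G}u\|^2$ requires $u \in \D(\sqrt{-G})$, which holds because $P_\e(t)f, P_\OO(t)f \in \D(M_\e) = \D(L)$, respectively $\D(L_\OO)$, for $t>0$ — the domain of the generator being contained in the domain of its square root, which in turn equals the form domain for a sectorial operator. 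Everything else is the routine identity relating the square-root norm to the form and the standard analytic-semigroup estimate, so I would only sketch those.
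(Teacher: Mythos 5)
Your argument breaks down at the step you treat as routine: the claim that $\n\sqrt{-G}\,u\n^2\eqsim\Re\,\mathfrak{a}(u,u)$ on the form domain (equivalently, that $\D(\sqrt{-G})$ coincides with the form domain with equivalent norms) for the operators $-M_\e$ and $-L_\OO$. For an m-sectorial operator coming from a \emph{non-symmetric} form one only has, for $u\in\D(G)$, $\mathfrak{a}(u,u)=\lb -Gu,u\rb=\lb(-G)^{1/2}u,((-G)\s)^{1/2}u\rb$, and the identification you invoke is precisely the Kato square root property, which fails for general m-sectorial operators (McIntosh's counterexample) and is not available here for free. Indeed, this is the central theme of Section \ref{sec:RT}: the identification $\D((-L_\OO)^{1/2})=\D(D_\OO^H)$ with equivalence of seminorms is immediate only when $B=\tfrac12 I$ (the selfadjoint case) and otherwise is Theorem \ref{thm:Riesz}, which requires the \emph{additional} hypothesis that $-\underline{L}_\OO$ admits a bounded $H^\infty$-calculus. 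The gradient estimates carry no such hypothesis, so your route is unavailable as stated (and, for $L_\OO$, would be circular even when the hypothesis holds). The surrounding remarks in your proposal — uniform sectoriality of $\mathfrak{a}_\e$ in $\e$, the coercivity $\Re\,\mathfrak{a}_\e(u,u)\ge\tfrac12\n D^Hu\n_H^2$ from $[Bh,h]=\tfrac12\n h\n_H^2$, and $\n\sqrt{-G}\,e^{tG}\n\le C/\sqrt t$ for analytic semigroups — are all fine; it is only the passage from the form to the square root that is wrong.

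The gap is easy to close, and the repair essentially lands on the paper's proof. Either argue directly: for $u=P_\e(t)f\in\D(M_\e)=\D(L)$,
\begin{equation*}
\tfrac12\n D^H P_\e(t)f\n_{\LH}^2\le \mathfrak{a}_\e(u,u)=\lb -M_\e u,u\rb_{\L}\le \n M_\e P_\e(t)f\n_{\L}\,\n P_\e(t)f\n_{\L}\le \frac{C}{t}\n f\n_{\L}^2,
\end{equation*}
using only contractivity and the analyticity bound $\n tM_\e P_\e(t)\n\le C$ (your uniformity-in-$\e$ discussion then applies to this constant), and verbatim for $L_\OO$ with the form $l_\OO$ on $\mathring{W}_H^{1,2}(\OO,\mui)$. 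Or follow the paper: the resolvent estimate \eqref{DR} (and its analogue for $R(\la,L_\OO)$, obtained by applying the same computation to the weak formulation \eqref{WS}) gives $\n D^H(I-tL+\tfrac t\e V_\e)^{-1}g\n_{\LH}\le\sqrt{2/t}\,\n g\n_{\L}$, after which one writes $D^HP_\e(t)f=D^H(I-tM_\e)^{-1}(I-tM_\e)P_\e(t)f$ and again uses contractivity plus analyticity. Both fixes use the form only through $\lb -Gu,u\rb$ or the resolvent equation, never through $\sqrt{-G}$, which is exactly how the paper avoids the Riesz-transform machinery at this stage.
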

\begin{proof}
Using (\ref{DR}) and setting $t=\frac 1 \la$ we observe that, for all $g\in L^2(E,\mu_\infty)$,
\[\n D^H(I-tL+\frac t \e V_\e )^{-1}g\n_{\LH}\le \sqrt{\frac{2}{t}}\n g\n_{\L}.\]
Then using this estimate with the $L^2$-contractivity of $P_\e(t)$ and its $L^2$-analyticity we obtain
\[\n D^HP_\e(t)f\n_{\LH}\le \sqrt{\frac{2}{t}}\n (I-tL+\frac t \e V_\e )P_{\e}(t)f\n_{\L}
\le \frac{C_\e}{\sqrt{t}}\n f\n_{\L},\]
with a constant $C_\e$ which, as an inspection of the proof shows, 
can be uniformly bounded from above independently of $\e>0$.
Applying the method of proof of the inequality (\ref{DR}) on the identity (\ref{WS}) yields
\begin{eqnarray}\label{GR}
\n D ^HR(\la,L_\OO )f\n_{\LKH}\le \sqrt{\frac{2}{\la}}\n f\n_{\LK}.
\end{eqnarray}
Then arguing as above we obtain
\[\n D ^HP_\OO (t)f\n_{\LKH}\le \frac{C}{\sqrt{t}}\n f\n_{\LK}.\]
\end{proof}

\section{Boundedness of the Riesz transform for $L_\OO $}
\label{sec:RT}

In this section we obtain sufficient conditions for the boundedness on $\LK$
of the Riesz transform associated with $L_\OO $.
Observe that when $L_\OO $ is selfadjoint (i.e. when $B=\frac1 2 I$), this follows from the identities
\begin{align*}
\n (-L_\OO) ^{1/2}f\n^2_{\LK} & = -\int_\OO  L_\OO f\cdot f \,d\mui \\ & =\frac12
\int_\OO [D_\OO ^Hf,D_\OO ^Hf]\,d\mui=\frac12\n D_\OO ^Hf\n^2_{\LKH}.
\end{align*}
In order to discuss the non-selfadjoint case we need to introduce some auxiliary operators.

We begin by defining the operator $D_\OO ^H$ 
with domain $\D(D_\OO ^H) : = \mathring{W}_H^{1,2}(\OO,\mui)$
by
$$ D_\OO ^H f := D^H \wt f, \quad f\in \D(D_\OO ^H).$$
By the definition of $\mathring{W}_H^{1,2}(\OO,\mui)$, 
 $ D^H \wt f$ vanishes $\mui$-almost everywhere on $\complement \OO$,
so that it can indeed be identified with an element of $L^2(\OO,\mui;H)$.

\begin{lemma}
The operator $D_\OO ^H$ is closed and densely defined in $L^2(\OO,\mui)$.
\end{lemma}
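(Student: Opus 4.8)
The plan is to verify the two assertions — density and closedness — separately, the density being immediate and the closedness being the only point requiring an argument.

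\emph{Density.} The domain $\D(D_\OO^H) = \mathring{W}_H^{1,2}(\OO,\mui)$ was already observed to be dense in $\LK$ immediately after the theorem identifying $\D(L_\OO)$, since $\D(L_\OO) \subseteq \mathring{W}_H^{1,2}(\OO,\mui)$ and $\D(L_\OO)$ is dense (being the domain of a $C_0$-semigroup generator on $\LK$). So I would simply invoke that.

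\emph{Closedness.} Suppose $f_n \in \mathring{W}_H^{1,2}(\OO,\mui)$ with $f_n \to f$ in $\LK$ and $D_\OO^H f_n = D^H\wt{f_n} \to g$ in $\LKH$. First I would pass to the full space $E$: extend by zero, so $\wt{f_n} \to \wt f$ in $\L$, and $D^H \wt{f_n} \to \wt g$ in $\LH$ (where $\wt g$ denotes the extension by zero of $g$, legitimate because $D^H\wt{f_n}$ vanishes $\mui$-a.e.\ on $\complement\OO$, hence so does the $\LH$-limit). Since $D^H$ is closed in $\L$ (it was defined as a closure), it follows that $\wt f \in \D(D^H)$ and $D^H \wt f = \wt g$; in particular $D^H\wt f$ vanishes $\mui$-a.e.\ on $\complement\OO$, which is exactly the condition defining $\mathring{W}_H^{1,2}(\OO,\mui)$. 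Hence $f \in \D(D_\OO^H)$ and $D_\OO^H f = g$, proving closedness.

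The only subtlety — and the one step I would state carefully — is the claim that a sequence of $H$-valued functions vanishing $\mui$-a.e.\ on $\complement\OO$ has an $\LH$-limit that also vanishes $\mui$-a.e.\ on $\complement\OO$; this is just the fact that $\{u \in \LH : u = 0 \ \mui\text{-a.e. on } \complement\OO\}$ is a closed subspace of $\LH$ (it is the kernel of the bounded restriction operator $u \mapsto u|_{\complement\OO}$), so it contains its limits. I do not expect any genuine obstacle here; the proof is a routine reduction to the already-established closedness of $D^H$ on $\L$.
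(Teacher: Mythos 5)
Your proof is correct and follows essentially the same route as the paper: density via the earlier observation that $\D(L_\OO)\subseteq \mathring{W}_H^{1,2}(\OO,\mui)$, and closedness by extending by zero and invoking the closedness of $D^H$ on $L^2(E,\mui)$. The extra care you take with the extension-by-zero of the limit $g$ is a correct (and slightly more explicit) spelling-out of a step the paper leaves implicit.
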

\begin{proof}
We have already seen that $\D(D_\OO^H) = \mathring{W}_H^{1,2}(\OO,\mui)$ is dense in $\LK$.
To see that $D_\OO^H$ is closed, let $f_n\in \D(D_\OO^H)$ be such that 
$f_n\to f$ in $L^2(\OO,\mui)$ and $D_\OO^H f_n \to g$  in $L^2(\OO,\mui;H)$. Then $\wt f_n \to \wt f$ in $L^2(E,\mui)$ and 
$D^H \wt f_n \to \wt g$ in $L^2(E,\mui;H)$, so $\wt f\in \D(D^H)$ and $D^H \wt f = \wt g$.
But this is the same as saying that $f\in \D(D_\OO^H)$ and $D_\OO ^H f = g$.
\end{proof}

Thanks to this lemma, the adjoint operator ${D_\OO ^H}\s = ({D_\OO ^H})\s$ is well-defined as a 
closed densely defined operator on $\LKH$.

The next lemma is a straightforward consequence of the definition of $L_\OO$ in terms of the bilinear form $l_\OO$.

\begin{lemma}\label{lem:L} We have
$$\D(L_\OO) = \{f\in \D(D_\OO^H): BD_\OO^H f\in \D(D_\OO^{H*})\}
= \{f\in\D(D_\OO^H): D_\OO^H f\in\D(D_\OO^{H*}B)\},$$
and for all $f\in\D(L_\OO)$ we have 
$$ L_\OO f = - D_\OO^{H^*}(BD_\OO^H)f =- (D_\OO^{H*}B)D_\OO^H f.$$
\end{lemma}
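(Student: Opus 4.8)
The plan is to unravel the definition of $L_\OO$ as the operator associated with the sectorial form $l_\OO(f,g) = \int_\OO [BD_\OO^H f, D_\OO^H g]\,d\mui$ on $\D(D_\OO^H) = \mathring W_H^{1,2}(\OO,\mui)$, and compare it with the operator $-D_\OO^{H*}(BD_\OO^H)$ defined via the adjoint. Recall the abstract recipe (as in \cite[Section 1.2.3]{O}): $f\in\D(L_\OO)$ iff $f\in\D(D_\OO^H)$ and there exists $u\in\LK$ with $l_\OO(f,g) = \int_\OO u\bar g\,d\mui$ for all $g\in\D(D_\OO^H)$, in which case $L_\OO f = -u$. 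The identity to be proved is essentially a bookkeeping statement about which domain conditions on $f$ guarantee the existence of such a $u$.

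First I would show the inclusion $\{f\in\D(D_\OO^H): BD_\OO^H f\in\D(D_\OO^{H*})\}\subseteq \D(L_\OO)$ with $L_\OO f = -D_\OO^{H*}(BD_\OO^H)f$. If $f\in\D(D_\OO^H)$ and $BD_\OO^H f\in\D(D_\OO^{H*})$, then by the very definition of the adjoint, for every $g\in\D(D_\OO^H)$ we have
\[
l_\OO(f,g) = \int_\OO [BD_\OO^H f, D_\OO^H g]\,d\mui = \int_\OO [D_\OO^{H*}(BD_\OO^H f), g]\,d\mui,
\]
so $u := D_\OO^{H*}(BD_\OO^H)f$ witnesses that $f\in\D(L_\OO)$ and $L_\OO f = -u$. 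Conversely, if $f\in\D(L_\OO)$, then $f\in\D(D_\OO^H)$ and $l_\OO(f,g) = -\int_\OO [L_\OO f, g]\,d\mui$ for all $g\in\D(D_\OO^H)$; rewriting the left side as $\int_\OO[BD_\OO^H f, D_\OO^H g]\,d\mui$, this says precisely that $BD_\OO^H f\in\D(D_\OO^{H*})$ with $D_\OO^{H*}(BD_\OO^H f) = -L_\OO f$. This gives the first equality of domains and the first formula for $L_\OO f$.

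For the second description, the point is that the composition $D_\OO^{H*}B$ acting on $D_\OO^H f$ coincides with $D_\OO^{H*}$ acting on $BD_\OO^H f$: since $B\in\calL(H)$ is \emph{bounded}, the pointwise multiplier $g\mapsto Bg$ on $\LKH$ is bounded and everywhere defined, so $\D(D_\OO^{H*}B) = \{w\in\LKH: Bw\in\D(D_\OO^{H*})\}$ and $(D_\OO^{H*}B)w = D_\OO^{H*}(Bw)$ on that domain. Applying this with $w = D_\OO^H f$ identifies $\{f\in\D(D_\OO^H): D_\OO^H f\in\D(D_\OO^{H*}B)\}$ with $\{f\in\D(D_\OO^H): BD_\OO^H f\in\D(D_\OO^{H*})\}$ and shows $(D_\OO^{H*}B)D_\OO^H f = D_\OO^{H*}(BD_\OO^H)f$, completing the proof.

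The only mild subtlety—really the one place where ``non-symmetric'' matters—is that $B\ne\tfrac12 I$ in general, so $D_\OO^{H*}B$ is not simply a multiple of $D_\OO^{H*}$; but since $B$ is bounded this causes no domain pathologies, as noted above. I do not expect any genuine obstacle: the whole statement is a routine transcription of the abstract first-representation theorem for sectorial forms, exactly parallel to Proposition \ref{prop:div} for the full-space operator $L$, using only that the form $l_\OO$ is closed, densely defined, and sectorial (established just before this lemma) and that $B$ is a bounded operator (Proposition \ref{prop:MN}).
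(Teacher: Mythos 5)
Your proof is correct and takes exactly the route the paper intends: the paper gives no written argument, declaring the lemma a straightforward consequence of the definition of $L_\OO$ via the sectorial form, and your write-up is precisely the routine first-representation-theorem bookkeeping (form--adjoint correspondence plus the observation that boundedness of $B$ makes $\D(D_\OO^{H*}B)=\{w: Bw\in\D(D_\OO^{H*})\}$) that justifies it. No gaps; the only cosmetic point is that over the real scalars no conjugate $\bar g$ is needed and $D_\OO^{H*}(BD_\OO^H f)$ is scalar-valued, so the pairing with $g$ is just the $L^2(\OO,\mui)$ inner product.
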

 
Consider the form
$$\underline{l}_\OO^I (F,G):= \int_\OO [{D_\OO ^H}\s F,{D_\OO ^H}\s G]_H\,d\mu_\infty$$ for
$F,G\in \D(\underline{l}_\OO^I ):=\D({D_\OO ^H}\s)$.
This form is accretive, densely defined and closed, and since it
is symmetric, it is sectorial.
Therefore the associated
operator, which we denote by $D_\OO ^H{D_\OO ^H}\s$, is densely defined, closed, and selfadjoint, with
domain
\[\D(D_\OO ^H{D_\OO ^H}\s) =\{F\in \D({D_\OO ^H}\s): \ {D_\OO ^H}\s F\in \D(D_\OO ^H)\}.\]
Since $B$ is bounded and coercive we have equivalences of norms
$$
\n Bu\n\eqsim\n u\n\eqsim\n B\s u\n.
$$
As a consequence, $B$ is
boundedly invertible. By the argument of \cite[Proposition 5.1]{MN2}),
it follows from \cite[Proposition 7.1]{AMN} that
 the operator $$\underline{L}_\OO :=-D_\OO ^H{D_\OO ^H}\s B$$
with domain
$$ \D(D_\OO ^H{D_\OO ^H}\s B) = \{ F\in L^2(\OO;H): \ BF\in  \D(D_\OO ^H{D_\OO ^H}\s)\}$$
 is closed, densely defined,
and sectorial.
In particular, $\underline{L}_\OO $ generates a bounded analytic semigroup,
denoted by $\underline{P}_\OO (t)$, on $\LKH$ (see \cite[Theorem 4.6]{EN}).

\begin{lemma}\label{lem:commute} For all $g\in \D(L_\OO )$ and $t>0$ we have 
$(I-tL_\OO )^{-1}g\in \D(D_\OO ^H)$ and 
\beq\label{1}
 D_\OO ^H(I-tL_\OO )^{-1}g=(I-t\underline{L}_\OO )^{-1}D_\OO ^Hg
\eeq
 and
 \beq\label{2}
  (I-tL_\OO )^{-1}{D_\OO ^H}\s B D_\OO ^H g={D_\OO ^H}\s B(I-t\underline{L}_\OO )^{-1}D_\OO ^H g.
 \eeq
\end{lemma}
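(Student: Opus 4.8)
The plan is to prove the two identities \eqref{1} and \eqref{2} by showing both sides solve the same resolvent problem, exploiting the factorizations $L_\OO = -D_\OO^{H*}(BD_\OO^H)$ from Lemma \ref{lem:L} and $\underline{L}_\OO = -D_\OO^H D_\OO^{H*}B$, together with the algebraic observation that these two operators differ only by the order in which $D_\OO^H$ and $D_\OO^{H*}B$ are composed. First I would record the "intertwining" computation: formally, $D_\OO^H L_\OO = -D_\OO^H D_\OO^{H*}(BD_\OO^H) = -(D_\OO^H D_\OO^{H*}B)D_\OO^H = \underline{L}_\OO D_\OO^H$ on an appropriate core, so $D_\OO^H$ maps the resolvent of $L_\OO$ into the resolvent of $\underline{L}_\OO$. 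The main work is to make this rigorous on domains.

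For \eqref{1}: fix $g\in \D(L_\OO)$ and set $f := (I - tL_\OO)^{-1}g$, so $f\in \D(L_\OO)\subseteq \D(D_\OO^H)$ by the theorem preceding this lemma (and Lemma \ref{lem:L}), giving $f - tL_\OO f = g$, i.e. $f + t D_\OO^{H*}(BD_\OO^H)f = g$. Applying $D_\OO^H$ to both sides, I need to check $D_\OO^H f + t D_\OO^H D_\OO^{H*}(BD_\OO^H)f = D_\OO^H g$. Here the point is that $BD_\OO^H f\in \D(D_\OO^{H*})$ by the characterization of $\D(L_\OO)$ in Lemma \ref{lem:L}, and moreover $D_\OO^H f\in L^2(\OO;H)$ with $B D_\OO^H f\in \D(D_\OO^{H*} B)\cdot$-type membership; one should verify that $D_\OO^H f$ lies in $\D(D_\OO^H D_\OO^{H*}B)$, i.e. that $B(D_\OO^H f)\in \D(D_\OO^H D_\OO^{H*})$, which amounts exactly to $BD_\OO^H f\in\D(D_\OO^{H*})$ and $D_\OO^{H*}(BD_\OO^H f)\in\D(D_\OO^H)$. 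The first is Lemma \ref{lem:L}; the second follows because $D_\OO^{H*}(BD_\OO^H)f = \tfrac1t(g - f)$ with $g\in\D(L_\OO)\subseteq\D(D_\OO^H)$ and $f\in\D(L_\OO)\subseteq\D(D_\OO^H)$. Hence $D_\OO^H f\in\D(\underline{L}_\OO)$ and $(I - t\underline{L}_\OO)(D_\OO^H f) = D_\OO^H f - t D_\OO^H D_\OO^{H*}(BD_\OO^H)f = D_\OO^H(f - tL_\OO f) = D_\OO^H g$. Since $I - t\underline{L}_\OO$ is injective with bounded inverse (as $\underline{L}_\OO$ generates a bounded analytic semigroup), this yields \eqref{1}.

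For \eqref{2}: apply $D_\OO^{H*}B$ to \eqref{1}. The left side gives $D_\OO^{H*}B D_\OO^H(I - tL_\OO)^{-1}g$; using $-L_\OO = D_\OO^{H*}(BD_\OO^H)$ and noting $(I-tL_\OO)^{-1}$ commutes with $L_\OO$, this equals $-L_\OO(I-tL_\OO)^{-1}g = (I-tL_\OO)^{-1}(-L_\OO g) = (I-tL_\OO)^{-1}D_\OO^{H*}(BD_\OO^H)g$, which is the left side of \eqref{2}. The right side of the map applied to \eqref{1} is $D_\OO^{H*}B(I-t\underline{L}_\OO)^{-1}D_\OO^H g$, the right side of \eqref{2}; here one must check $(I-t\underline{L}_\OO)^{-1}D_\OO^H g\in\D(D_\OO^{H*}B)$, which holds because that element equals $D_\OO^H(I-tL_\OO)^{-1}g$ by \eqref{1} and $(I-tL_\OO)^{-1}g\in\D(L_\OO)$ forces $B D_\OO^H(I-tL_\OO)^{-1}g\in\D(D_\OO^{H*})$ by Lemma \ref{lem:L}. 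I expect the main obstacle to be the bookkeeping of domains — precisely, verifying that $D_\OO^H f\in\D(\underline{L}_\OO)$ when $f\in\D(L_\OO)$, since $\underline{L}_\OO$ is defined through the twisted form $\underline{l}_\OO^I$ and the bounded invertibility of $B$, so one has to translate the form-domain membership into the operator-composition membership cleanly; everything else is formal manipulation once the operators are known to generate analytic semigroups (hence have bounded resolvents at $\lambda = 1/t$).
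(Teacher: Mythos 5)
Your proof is correct and follows essentially the same route as the paper: both use the factorizations of Lemma \ref{lem:L} to intertwine $D_\OO^H$ with the resolvents, your domain bookkeeping for \eqref{1} (namely $BD_\OO^Hf\in\D({D_\OO^H}\s)$ and ${D_\OO^H}\s BD_\OO^Hf=t^{-1}(g-f)\in\D(D_\OO^H)$, so that $D_\OO^Hf\in\D(\underline{L}_\OO)$) being exactly what the paper's core set $\calA=\{f\in\D(L_\OO):\,L_\OO f\in\D(D_\OO^H)\}$ encodes. The only (minor) divergence is in \eqref{2}: the paper establishes $B(I-t\underline{L}_\OO)^{-1}D_\OO^Hg\in\D({D_\OO^H}\s)$ by a duality pairing against $D_\OO^Hf$ for $f\in\D(D_\OO^H)$, whereas you get it directly from \eqref{1} and Lemma \ref{lem:L}, since $(I-t\underline{L}_\OO)^{-1}D_\OO^Hg=D_\OO^H(I-tL_\OO)^{-1}g$ with $(I-tL_\OO)^{-1}g\in\D(L_\OO)$ — a legitimate shortcut that yields the same identity.
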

\begin{proof}
The set $\calA:=\{f\in \D(L_\OO ): \  L_\OO f\in \D(D_\OO ^H)\}$ is dense
(it contains the dense set $\calB = \{R(\lambda,L_\OO)g:\ \lambda>0, \ g\in\D(L_\OO)\}$)  
and invariant under $P_\OO (t)$,
and therefore it is a core for $\D(L_\OO )$.

For  all $f\in \calA$ we have, using Lemma 
\ref{lem:L} to justify the formal computation, 
\[D_\OO ^HL_\OO f=-D_\OO ^H{D_\OO ^H}\s BD_\OO ^Hf=\underline{L}_\OO D_\OO ^Hf.\]
Multiplying the resulting identity
\[D_\OO ^H(I-t L_\OO )f=(I-t \underline{L}_\OO )D_\OO ^Hf\]
on the left by $(I-t \underline{L}_\OO )^{-1}$ and taking $f = (I-t {L}_\OO )^{-1} g$
with $g\in \D(L_\OO )$ (in which case we have $f\in \calA$),
the identity in (\ref{1})
is obtained for functions  $g\in \D(L_\OO )$.

Next, $\D(L_\OO)$ is a core for $\D(D_\OO^H)$ and by \eqref{1},
for all $f\in \D(D_\OO ^H)$ we have
 \begin{align*}
[{D_\OO ^H}f, B(I-t\underline{L}_\OO )^{-1}D_\OO ^H g]
& = [{D_\OO ^H}f, BD_\OO ^H (I-t{L}_\OO )^{-1} g]
\\ & = -[f, L_\OO  (I-t{L}_\OO )^{-1} g]
 =-[f,(I-t{L}_\OO )^{-1} L_\OO  g]
\\ & = [f,(I-t{L}_\OO )^{-1}{D_\OO ^H}\s BD_\OO ^H g].
\end{align*}
This show that
$B(I-t\underline{L}_\OO )^{-1}D_\OO ^H g$ is in $\D({D_\OO ^H}\s)$ and \eqref{2} holds.
\end{proof}

By standard semigroup theory, the above lemma implies the identity
$$ \underline{P}_\OO(t)D_\OO^H f = D_\OO^H P_\OO(t)f,$$
first for $f\in \D(L_\OO)$ and then for $f\in\D(D_\OO^H)$, using that 
$\D(L_\OO)$ is a core for $\D(D_\OO^H)$. In particular, we see that
the semigroup $\underline{P}_\OO$ maps $\overline{\Ran(D_\OO^H)}$ into itself.
From now on, we shall always consider $\underline{P}_\OO$ as a semigroup 
on this space. By a slight abuse of notation its generator, which is the part of $\underline{L}_\OO$
in $\overline{\Ran(D_\OO^H)}$, will be denoted again by $\underline{L}_\OO$. 

On the product space $\L \oplus \ovRD$ we now consider the operator
 \[\Pi_\OO :=\left(\begin{matrix}
   0 & {D_\OO ^H}\s B \\
   D_\OO ^H & 0 \\
   \end{matrix}
         \right)
\]
with domain $\D(\Pi_\OO) = \D(D_\OO^H)\oplus \D(D_\OO^{H*}B)$, where, by the 
same abuse of notation, we denote by $\D_\OO^{H^*}B$ the domain of the 
part of $D_\OO^{H*}B$ in $\ovRD$.
Observe that
 \[\Pi^2_\OO :=\left(
   \begin{matrix}
   -L_\OO  & 0 \\
   0 & -\underline{L}_\OO  \\
   \end{matrix}
         \right).\]

A densely defined closed linear operator $A$ is called {\em bisectorial} 
if $i\R\setminus\{0\}\subseteq \varrho(A)$ and
$$ \sup_{t\in\R\setminus 0} \n (I-itA)^{-1} \n < \infty.$$
A standard Taylor expansion argument implies that there exists an $\theta\in (0,\frac12\pi)$
such that the open bisector of angle $\theta$ around the imaginary axis belongs to
$\varrho(A)$ and the above uniform boundedness estimate extends to this bisector.
 
Let us recall the following result (see \cite[Section (H)]{ADM}) which uses McIntosh's notion of a bounded
functional calculus. Let $\theta\in (0,\frac12\pi)$ be given. A 
sectorial operator $T$ on a Banach space $F$ admits a {\em bounded functional calculus of angle $\theta$} 
if the Dunford functional calculus
of $T$ extends to a bounded homomorphism 
$$ H^\infty(\Sigma_\theta) \to \calL(F), \qquad f\mapsto f(T).$$
Here $\Sigma_\theta = \{z\in \C\setminus\{0\}: \ |\arg(z)|<\theta\}$ is the open sector
in the complex right half-plane with aperture $\theta$. 
For a detailed treatment we refer the reader to \cite{ADM, Haase, KuW}.
The bounded functional calculus 
$$ H^\infty(\Sigma_\theta\cup -\Sigma_\theta) \to \calL(F), \qquad f\mapsto f(T)$$
for bisectorial operators $T$ on $E$ is defined similarly.

\bpr\label{BHFC}
If $\Pi$ is a bisectorial operator on a Hilbert space $\mathscr{H}$, then $\Pi^2$ is sectorial on $\mathscr{H}$
and for each $\theta\in (0,\frac\pi 2 )$ the following assertions are equivalent:
\ben
\item $\Pi$ admits a bounded functional calculus on a bisector of angle $\theta$;
\item $\Pi^2$ admits a bounded functional calculus on a sector of angle $2\theta$.
\een
\epr

Now we are ready to state and prove the first main result of this section.
Examples where the conditions of the theorem are fulfilled are given subsequently.

\bth\label{thm:Riesz} Suppose that $-\underline{L}_\OO $ admits a bounded holomorphic functional calculus
on $\ovRD$.
Then,
\begin{equation}\label{RT1}
\begin{aligned}
\D(D_\OO ^H) & = \D((-L_\OO )^{1/2}), \\
\D({D_\OO ^H}\s B) & = \D((-\underline L_\OO )^{1/2}),
\end{aligned}
\end{equation}
with equivalence of the homogeneous seminorms
\begin{equation}\label{RT2}
\begin{aligned}
\n D_\OO ^H f\n_{\LKH} & \eqsim\n (-L_\OO )^{1/2}f||_{\LK}, \\
 \n {D_\OO ^H}\s B g\n_{\LK} & \eqsim\n (-\underline{L}_\OO )^{1/2}g||_{\LKH}.
\end{aligned}
\end{equation}
\eth
\bpf
We shall prove that $\Pi_\OO $ is
bisectorial on $L^2(\OO,\mui)\oplus \ovRD$. 
Assuming this for the moment, we first show how the result follows from this.

Since $-L_\OO $ and $-\underline{L}_\OO $ have 
bounded functional calculi on suitable sectors of angle $<\frac12\pi$
(for $-L_\OO$ this follows from the fact that $L_\OO$ generates an analytic contraction semigroup),
the same is true for $\Pi_\OO ^2$ and hence, by
Proposition \ref{BHFC}, $\Pi_\OO $ has a bounded functional calculus on a bisector of angle $<\frac14\pi$. This implies
the boundedness of the operators
${\Pi_\OO }/{\sqrt{\Pi_\OO ^2}}$ and of ${\sqrt{\Pi_\OO ^2}}/{\Pi_\OO }$
(apply the functional calculus of $\Pi_\OO $ to the
the bounded holomorphic functions $z/\sqrt{z^2}$ and $\sqrt{z^2}/z$).
By a standard argument, this implies (\ref{RT1}) and (\ref{RT2}); we refer to \cite{AKM, MN2}
for the details.

It remains to prove the bisectoriality of $\Pi_\OO $.
Fix $t\in \R\setminus\{0\}$ and consider the operator matrix
 \[R_t:=\left(
   \begin{matrix}
     (I-t^2L_\OO )^{-1} & it(I-t^2L_\OO )^{-1}{D_\OO ^H}\s B \\
     itD_\OO ^H(I-t^2L_\OO )^{-1} & (I-t^2\underline{L}_\OO )^{-1} \\
   \end{matrix}
 \right)
 \]
By Lemma \ref{lem:commute}, the identity $R_t (I-it\Pi_\OO )=I$ holds on the linear subspace
of all $(g,G) \in \LK\oplus \ovRD$
with $g\in \D(L_\OO )$ and $G = D_\OO^H g'$ with $g'\in \D(L_\OO )$. Since $\D(L_\OO )$ is a core
for $\D(D_\OO ^H)$, this linear subspace is dense and the
identity extends to all pairs $(g,G) \in \LK\oplus \ovRD$.

This shows that $R_t$ equals the resolvent $(I-it\Pi_\OO )^{-1}$ defined on
$\LK\oplus\ovRD$.
 Let us now study the boundedness of each of the entries of the matrix $R_t$.

We have already seen that
  \[\n(I-t^2L_\OO )^{-1}\n_{\LK}\le 1\ \ {\rm and }\ \
\n(I-t^2\underline{L}_\OO )^{-1}\n_{\LKH}\le C,\]
with a constant $C$ independent of $t\in \R\setminus\{0\}$.

Taking $\lambda=\frac{1}{t^2}$ in (\ref{GR}) we obtain
\beq\label{NDR}
\n tD_\OO ^H(I-t^2L_\OO )^{-1}\n_{\LKH}\le 2.
\eeq
The previous argument hold also if we replace $B$ by $B\s$ in the definition of $L_\OO $, so we
obtain (\ref{NDR}) with $L_\OO \s$ instead of $L_\OO $.
Then, using Lemma \ref{lem:L} to see that $L_\OO = (D_\OO^{H*}B)D_\OO^H$ implies
$L_\OO\s = D_\OO^{H*}(D_\OO^{H*}B)\s = D_\OO^{H*}(B\s D_\OO^H) = (D_\OO^{H*}B\s )D_\OO^H$,
and using \eqref{NDR} with $L_\OO \s$ instead of $L_\OO $, we obtain
 \begin{align*} \n t B\s D_\OO ^H(I-t^2L\s_\OO )^{-1}\n_{\LKH}
& \le  \n B\n \n t
D_\OO ^H(I-t^2L\s_\OO )^{-1}\n_{\LKH} 
\\ & \le 2\n B\n \le 1,
\end{align*}
and by duality we obtain
\[\n t(I-t^2L_\OO )^{-1}{D_\OO ^H}\s B \n_{\LK}\le 1. \]

As a consequence, the operators $(I-it\Pi_\OO )^{-1}$ are uniformly bounded on the space $\LK\oplus
\ovRD$ for all $t\in \R\setminus\{0\}$. By standard arguments, this implies 
that $\Pi_\OO $ is bisectorial on $\LK\oplus
\ovRD$.
\epf

 The condition that $-\underline{L}_\OO $ has a functional calculus is satisfied when $L_\OO $ is selfadjoint.
Indeed, then $\underline{L}_\OO $ is selfadjoint as well and
$-\underline{L}_\OO $, being non-negative and selfadjoint, admits a bounded holomorphic
calculus.

\medskip\noindent
{\em Open problem.} If $-\underline{L}$ admits a bounded holomorphic
calculus on $\overline{\Ran(D^H)}$, does  $-\underline{L}_\OO $ admit a bounded holomorphic
calculus on $\ovRD$?

\medskip
An affirmative answer would imply that the condition of Theorem \ref{thm:Riesz} is always
satisfied in case $H = E = \R^n$ (as is explained in the discussion below \cite[Theorem 2.2]{MN2}).

In order to state a second open problem we need to introduce some notation.
We begin with a lemma
which asserts that we can define $\underline{D}_\OO ^H = D_\OO ^H\otimes I$
as a closed and densely defined operator from
$L^2(\OO,\mui)$ to $L^2(\OO,\mu_\infty;H\wh\otimes H)$.
Here, and in what follows, we denote by $\otimes $ and $\wh \otimes $ the
algebraic tensor product and the completed Hilbert space tensor product respectively.

\begin{lemma}
The mapping $\underline{D}_\OO^H: \mathring{W}_H^{1,2}(\OO,\mui)\otimes H \to L^2(\OO,\mui; H\wh \otimes H)$
defined by $$ \underline{D}_\OO^H (f\otimes h) := D_\OO^H f \otimes h $$
is closable as a operator from $ L^2(\OO,\mui;H)$ to $L^2(\OO,\mui; H\wh\otimes H)$. 
\end{lemma}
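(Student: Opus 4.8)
The statement is a closability claim for a tensored gradient, and the natural route is to reduce it to the closability of $D_\OO^H$ itself, which we already know from the lemma preceding this one. First I would fix an orthonormal basis $(e_k)_{k\ge 1}$ of $H$ and write a general element of $\mathring{W}_H^{1,2}(\OO,\mui)\otimes H$ in the form $F = \sum_{k} f_k\otimes e_k$ (a finite sum), so that $\underline{D}_\OO^H F = \sum_k (D_\OO^H f_k)\otimes e_k$, where the copies $(\,\cdot\,)\otimes e_k$ sit in pairwise orthogonal subspaces of $L^2(\OO,\mui;H\wh\otimes H)$. The point is that, under this identification, $\underline{D}_\OO^H$ acts \emph{diagonally}: the $k$-th component of $\underline{D}_\OO^H F$ depends only on $f_k$, and equals $D_\OO^H f_k$.

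\textbf{Key steps.} The plan is to verify the standard criterion for closability: if $F_n \to 0$ in $L^2(\OO,\mui;H)$ and $\underline{D}_\OO^H F_n \to G$ in $L^2(\OO,\mui;H\wh\otimes H)$, then $G = 0$. Writing $F_n = \sum_k f_{k,n}\otimes e_k$ and decomposing $G = \sum_k g_k \otimes e_k$ along the orthogonal sum $L^2(\OO,\mui;H\wh\otimes H) = \bigoplus_k L^2(\OO,\mui;H)\otimes e_k$, convergence in the big space forces, for each fixed $k$, that $f_{k,n}\to 0$ in $L^2(\OO,\mui)$ and $D_\OO^H f_{k,n}\to g_k$ in $L^2(\OO,\mui;H)$ as $n\to\infty$. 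By the closedness of $D_\OO^H$ established in the earlier lemma, $g_k = 0$ for every $k$, hence $G = 0$. This proves closability. I would also remark (as the lemma's phrasing suggests is intended) that the closure, again denoted $\underline{D}_\OO^H$, has domain containing $\mathring{W}_H^{1,2}(\OO,\mui)\wh\otimes H$ with $\underline{D}_\OO^H = D_\OO^H \otimes I_H$ in the natural sense, and that on this domain $\n \underline{D}_\OO^H F\n_{L^2(\OO,\mui;H\wh\otimes H)}^2 = \sum_k \n D_\OO^H f_k\n_{L^2(\OO,\mui;H)}^2$.

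\textbf{Main obstacle.} The only genuinely delicate point is the bookkeeping with the infinite-dimensional target $H\wh\otimes H$: one must be careful that the orthogonal decomposition $L^2(\OO,\mui;H\wh\otimes H)\cong \bigoplus_k L^2(\OO,\mui;H\otimes e_k)$ is valid (it is, since $(e_k)$ is an orthonormal basis of $H$ and $H\wh\otimes H\cong H\otimes_2 H$ unitarily via $u\otimes v\mapsto (u\otimes \lb v,e_k\rb)_k$), and that passing to components commutes with $L^2$-limits, which is immediate because orthogonal projections onto the $k$-th summand are contractive. Since the domain of $\underline{D}_\OO^H$ as given in the lemma consists only of \emph{finite} sums $\sum_k f_k\otimes e_k$, no convergence issue arises on the domain side, and the argument is essentially a diagonal reduction to the scalar case. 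Hence the proof is short; I expect it to be one paragraph once the basis and the orthogonal decomposition are set up.
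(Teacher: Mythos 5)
Your proof is correct and follows essentially the same route as the paper: both reduce closability to the closedness of $D_\OO^H$ by testing the second tensor slot, the paper by pairing with an arbitrary $h\in H$ (using $[\underline{D}_\OO^H F_n,h]=D_\OO^H[F_n,h]$), you by projecting onto the components of an orthonormal basis $(e_k)$, which is the same computation with $h=e_k$. The only cosmetic difference is that the paper avoids choosing a basis and the orthogonal-sum bookkeeping altogether.
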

\begin{proof}
 Suppose $F_n\to 0$ in $L^2(\OO,\mui;H)$, with each $F_n \in \mathring{W}_H^{1,2}(\OO,\mui)\otimes H$,
and  $\underline{D}_\OO^H F_n \to G$ in $L^2(\OO, \mui; H\wh\otimes H)$.
Then, for each $h\in H$, $[F_n,h] \to 0 $ in $L^2(\OO,\mui)$, $[F_n,h] \in \mathring{W}_H^{1,2}(\OO,\mui)$,
and  $$[\underline{D}_\OO^H F_n,h] = D_\OO^H [F_n,h]  \to [G,h]$$ in $L^2(\OO,\mui; H)$.
The closedness of $D_\OO^H$ implies $[G,h]=0$ in $L^2(\OO,\mui; H)$ for all $h\in H$, and therefore
$G=0$ in $L^2(\OO,\mui; H\wh\otimes H)$.
\end{proof}

By a slight abuse of notation, from now one we shall denote by $\underline{D}_\OO^H$ 
the closure of this operator and by
$\D(\underline{D}_\OO^H)$ its domain.
The closed operator $\underline{D}^H$ with domain $\D(\underline{D}^H)$ is defined
similarly (see, e.g., \cite[Section 11]{MN2}). 

Let $$
\begin{aligned}
\mathring{W}_H^{2,2}(\OO,\mui) 
& = \big\{f\in \D(D_\OO^H): \ D_\OO^H f \in  \D(\underline D_\OO^H)\big\} \\ 
& =
\big\{f\in L^2(\OO,\mui): \ \wt f \in \D(D^H), \ D^H f \in  \D(\underline D^H), \\
& \qquad D^H \wt f = 0 \ \mui\hbox{-a.e. on }\complement \OO,  \
       \underline{D}^H (D^H \wt f) = 0 \ \mui\hbox{-a.e. on }\complement \OO\big\}. 
\end{aligned}
$$ 
With respect to the norm $\n \cdot\n_{\mathring{W}_H^{2,2}(\OO,\mui) }$ defined by
$$ \n f\n_{\mathring{W}_H^{2,2}(\OO,\mui)}^2 = \n f\n^2 + \n D_\OO^H f\n^2 + \n \underline{D}_\OO^H (D_\OO^H f)\n^2,$$
this space is a Hilbert space. 

\medskip\noindent
{\em Open problem.} Under what conditions on $A$ and $\OO$ do we have a continuous inclusion 
\begin{equation*} \D(L_\OO ) \subseteq \mathring{W}_H^{2,2}(\OO,\mui) ?
\end{equation*}
For $\OO = E$, this inclusion is obtained in \cite[Proposition 11.1(iii)]{MN2}.
In the case of non-trivial open domains $\OO\subset E$, Da Prato and Lunardi \cite{DL2}
obtained the inclusion in the case $A = I$ under suitable regularity conditions
on $\partial \OO$ and showed that the inclusion may fail if no such conditions are imposed.  
The methods of \cite{MN2} seem not to adapt very well to the domain setting.
\medskip

We conclude with a Poincar\'e inequality for $L_\OO $.

\bth[Poincar\'e inequality for $L_\OO $]\label{thm:PoincII} 
Suppose that $\mui(\complement \OO)>0$ and that
 $S_\infty$ is uniformly exponentially stable. If
$-\underline{L}_\OO $ admits a bounded holomorphic functional calculus
on $\ovRD$, there is a constant $C$ such that for all $u \in \mathring{W}^{1,2}(\OO,\mui)$
we have
\[\n u \n_{\LK}\le C\n D_\OO ^Hu\n_{\LKH} .\]
\eth

\bpf
The proof is a modification of \cite[Proposition 3.9]{DL}.

{\em Step 1} -- In this step we prove that $0\in \varrho(L_\OO )$.

Since $P_\OO $ is a contraction semigroup on $L^2(\OO,\mu_\infty)$ (see Proposition \ref{prop:PKcontr}), the spectrum of $L_\OO $ is contained in the closed left-half plane.
Therefore if $0\in\sigma(L_\OO )$, it belongs to the approximate point spectrum of $L_\OO $.
This means that there is a sequence $(u_n)_{n\ge 1}$ in $\D(L_\OO )$ such that $\n u_n\n_{L^2(\OO,\mu_\infty)} = 1$
for all $n\ge 1$ and $\limn L_\OO  u_n = 0$ in $L^2(E,\mu_\infty)$.
Then,
$$\int_\OO ||D_\OO ^Hu_n||^2_H \,d\mu_{\infty}=2\int_\OO [BD_\OO ^Hu_n,D_\OO ^Hu_n]\, d\mu_{\infty} = -2[L_\OO  u_n,u_n] \rightarrow 0.$$
Hence $\limn D^H \wt u_n =0$ in  $L^2(E,\mu_\infty)$. Therefore, by Theorem \ref{thm:PoincI},
$ \limn (\wt u_n - \overline {\wt u_n}) = 0$ in $L^2(E,\mu_\infty)$.
But then $\limn \n \overline {\wt u_n}\n_{L^2(E,\mu_\infty)} = 1$, which means that
$\overline{\wt u_n} \to 1$ in $L^2(E,\mu_\infty)$.  Passing to a subsequence, we may also
assume that the convergence holds $\mu_\infty$-almost everywhere. But this contradicts
the fact that $\wt u_n$ vanishes on the set $\complement \OO$ which has positive $\mu_\infty$-measure by assumption.

{\em Step 2} -- By Step 1, $L_\OO $ is boundedly invertible,
and then $(-L_\OO )^{1/2}$ is boundedly invertible as well.
 Consequently, for $u\in \mathring{W}^{1,2}(\OO,\mui)
= \D((-L_\OO) ^{1/2}) = \D(D_\OO ^H)$ we have, by the equivalence of seminorms of Theorem 
\ref{thm:Riesz}, 
\begin{align*} \n u\n_{\LK} 
& \le \n (-L_\OO )^{-1/2}\n \n (-L_\OO) ^{1/2} u\n_{\LK}
\\ & \eqsim \n (-L_\OO )^{-1/2}\n\n D_\OO ^H u\n_{\LKH}.
\end{align*}
\epf

Note that if $\mui(\complement \OO)=0$, then we have a canonical identification 
$L^2(\OO,\mui) = L^2(E,\mui)$, and under this identification we have
$\D_\OO^H = D^H$ and $L_\OO = L$. Then it follows from Theorem \ref{thm:PoincI}
that 
$$\n u - \bar u \n_{\LK}\le C\n D_\OO ^Hu\n_{\LKH} .$$ 

\begin{remark}
 Step 1 in the above proof could be simplified (along the lines of
\cite{DL}) if we knew that $L$ has compact resolvent.
\end{remark}

{\em Acknowledgment} -- We thank the referees for their suggestions which 
improved the presentation of this paper. This work was started during a three month visit of the first named author 
at Delft University of Technology. She would like to thank the department for its kind hospitality.

\end{document}